\def\mycmd{1}
\newcommand{\mtrx}[1]{\mathbf{#1}}
\newcommand{\gmtrx}[1]{\bm{#1}}
\newcommand{\me}{\mathrm{e}}
\newcommand{\mi}{\mathrm{i}}
\newcommand{\norm}[1]{\left\lVert#1\right\rVert}
\renewcommand*\env@matrix[1][c]{\hskip -\arraycolsep
  \let\@ifnextchar\new@ifnextchar
  \array{*\c@MaxMatrixCols #1}}
\begin{document}
\title{Analytic Prediction of Isolated Forced Response Curves from Spectral Submanifolds}
\titlerunning{Analytic Prediction of Isolated Forced Response Curves from Spectral Submanifolds}   
\author{S. Ponsioen \and T. Pedergnana \and G. Haller}
\institute{S. Ponsioen \and T. Pedergnana \and G. Haller (\Letter) \at
              Institute for Mechanical Systems, 
              ETH Z{\"u}rich, 
              Leonhardstrasse 21, 
              8092, Z{\"u}rich, Switzerland \\
              \email{georgehaller@ethz.ch}
}
\date{}
\maketitle
\begin{abstract}
We show how spectral submanifold theory can be used to provide analytic predictions for the response of periodically forced multi-degree-of-freedom mechanical systems. These predictions include an explicit criterion for the existence of isolated forced responses that will generally be missed by numerical continuation techniques. Our analytic predictions can be refined to arbitrary precision via an algorithm that does not require the numerical solutions of the mechanical system. We illustrate all these results on low- and high-dimensional nonlinear vibration problems. We find that our SSM-based forced-response predictions remain accurate in high-dimensional systems, in which numerical continuation of the periodic response is no longer feasible. 

\keywords{Spectral submanifolds \and Model order reduction \and Forced response curves\and Isolas }

\end{abstract}

\section{Introduction}
For an $n$-degree-of-freedom, periodically forced, nonlinear mechanical system, the forced response curve (FRC) gives the amplitude of the periodic response of the system as a function of the frequency of the periodic forcing. The FRC may contain isolated branches of periodic solutions, also known as \textit{isolas}, that are detached from the main FRC. A small change in the forcing amplitude might result in the merger of the isola with the main branch of the FRC (cf. Detroux et al.\ \cite{Detroux2018} and Noël et al. \cite{noel2015isolated}), which can lead to an unexpected and significant increase in the response amplitude.

The existence of isolated branches of periodic solutions in the frequency response of nonlinear oscillatory systems has been known since the 1950s \cite{abramson1955response}. For an extensive review of the subject, we refer the reader to Habib et al.\ \cite{Habib2017}. It is broadly agreed that the identification of isolas is difficult, because numerical continuation techniques are generally initiated on a non-isolated solution branch and will therefore miss any isolated branch. Similarly, a frequency sweep of the full system will generally not capture an isolated response unless the sweep is initialized on one. 

The detection of isolas and the prediction of their behavior under changing system parameters can be critical in practice because, their merger with the main FRC may lead to a dramatic shift in the resonance frequency and response amplitude. Habib et al. \cite{Habib2017} use singularity theory in combination with averaging for the prediction and identification of isolas in a specific single-degree-of-freedom mechanical system with nonlinear damping. The averaging method they use (cf. Sanders et al. \cite{sanders2007averaging}), however,  requires both the forcing amplitude and the nonlinear damping coefficients to be small. Hill et al. \cite{hill2016analytical} use a second-order normal form technique to obtain analytical expressions for the autonomous conservative backbone curves (i.e. amplitude-frequency plots of nonlinear periodic orbits) of a specific two-degree-of-freedom mechanical system. They give leading-order criteria for the intersection of this backbone curve with the forced response curve and postulate this location to be a potential starting point for an isola, which is to be constructed numerically in a separate effort. This procedure also relies on the smallness of the nonlinear and damping coefficients, as well as on the absence of quadratic nonlinearities.

In summary, while the significance of isolas is broadly recognized, their existence has only been studied in specific, low-dimensional examples under restrictions on the nonlinearities. A conclusive analytical criterion for predicting isolas in multi-degree of freedom systems without costly numerical simulations, therefore, has been unavailable. 

In this work, we seek to fill this gap by developing a generally applicable methodology for the prediction of isolas in multi-degree-of-freedom, forced mechanical systems. Our approach is based on the mathematically rigorous theory of spectral submanifolds (SSMs) that are the unique, smoothest, nonlinear continuations of spectral subspaces of the linearized, unforced limit of a mechanical system (cf. Haller and Ponsioen \cite{Haller2016}). The reduced dynamics on a two-dimensional SSM serves as an exact, single-degree-of-freedom reduced-order model that can be constructed for each vibration mode of the full nonlinear system (cf. \cite{ponsioen2018automated,Jain2018,Breunung2017,Szalai2017,kogelbauer2018rigorous}). 

By construction, these rigorously, simplified two-dimensional reduced models will capture all isolas that are remnants of periodic orbit families of the conservative limit of the system. As we show for a cubic-order approximation, the reduced SSM dynamics gives a closed form first-order prediction for isolas that can even be calculated by hand in simple examples. Higher-order refinements to this analytic formula can be recursively constructed and have been implemented in the publicly available \textsc{MATLAB} script \textsc{SSMtool}\footnote{\textsc{SSMtool} is available at: \href{http://www.georgehaller.com}{www.georgehaller.com}}. We show the use of the analytic formula as well as its numerical refinements on simple and more complicated examples.

\section{System set-up }
We consider $n$-degree-of-freedom, periodically forced mechanical systems of the form
\begin{gather}
\mtrx{M}\ddot{\mtrx{y}}+\mtrx{C}\dot{\mtrx{y}}+\mtrx{K}\mtrx{y}+\mtrx{g}(\mtrx{y},\dot{\mtrx{y}})=\varepsilon \mtrx{f}(\Omega t),\quad 0\leq\varepsilon\ll 1, \label{eq:mech_sys} \\ 
\mtrx{g}(\mtrx{y},\dot{\mtrx{y}})=\mathcal{O}\left(\left|\mtrx{y}\right|^{2},\left|\mtrx{y}\right|\left|\dot{\mtrx{y}}\right|,\left|\dot{\mtrx{y}}\right|^{2}\right), \nonumber 
\end{gather}
where $\mtrx{y}\in\mathbb{R}^{n}$ is the generalized position vector; $\mtrx{M}=\mtrx{M}^{T}\in\mathbb{R}^{n\times n}$ is the positive definite mass matrix; $\mtrx{C}=\mtrx{C}^{T}\in\mathbb{R}^{n\times n}$ is the damping matrix; $\mtrx{K}=\mtrx{K}^{T}\in\mathbb{R}^{n\times n}$ is the stiffness matrix and $\mtrx{g}(\mtrx{y},\dot{\mtrx{y}})$ denotes all the nonlinear terms in the system. These nonlinearities are assumed to be analytic for simplicity. The external forcing $\varepsilon \mtrx{f}(\Omega t)$ does not depend on the positions and velocities. 

System (\ref{eq:mech_sys}) can be transformed into a set of $2n$ first-order ordinary differential equations by introducing the change of variables $\mtrx{x}_{1}=\mtrx{y}$, $\mtrx{x}_{2}=\dot{\mtrx{y}}$, with $\mtrx{x}=(\mtrx{x}_{1},\mtrx{x}_{2})\in\mathbb{R}^{2n}$, which gives
\begin{align}
\dot{\mtrx{x}} & =
\left(\begin{array}{cc}
\mtrx{0} & \mtrx{I}\\
-\mtrx{M}^{-1}\mtrx{K} & -\mtrx{M}^{-1}\mtrx{C}
\end{array}\right)\mtrx{x}
+\left(\begin{array}{c}
\mtrx{0}\\
-\mtrx{M}^{-1}\mtrx{g}(\mtrx{x}_{1},\mtrx{x}_{2})
\end{array}\right)
+ \varepsilon\left(
\begin{array}{c}
\mtrx{0}\\
\mtrx{M}^{-1}\mtrx{f}(\Omega t)
\end{array}
\right) \nonumber \\
&=\mtrx{A}\mtrx{x}+\mtrx{G}_\text{p}(\mtrx{x})+
\varepsilon \mtrx{F}_\text{p}(\Omega t).\label{eq:dyn_sys}
\end{align}
The transformed first-order system (\ref{eq:dyn_sys}) has a fixed point at $\mtrx{x}=\mtrx{0}$ when the system is unforced ($\varepsilon=0$); $\mtrx{A}\in\mathbb{R}^{2n\times2n}$ is a constant matrix and $\mtrx{G}_\text{p}(\mtrx{x})$ is an analytic function containing all the nonlinearities. Note that $\mtrx{M}^{-1}$ is well-defined because $\mtrx{M}$ is assumed positive definite. 

The linearized part of system (\ref{eq:dyn_sys}) is 
\begin{equation}
\dot{\vec{x}}=\vec{A}\vec{x},\label{eq:lin_dyn_sys}
\end{equation}
where the matrix $\vec{A}$ has $2n$ eigenvalues $\lambda_{k}\in\mathbb{C}$
for $k=1,\ldots,2n$. Counting multiplicities, we sort these eigenvalues
based on their real parts in the decreasing order

\begin{equation}
\text{\text{Re}}(\lambda_{2n})\leq\text{\text{Re}}(\lambda_{2n-1})\leq\ldots\leq\text{\text{Re}}(\lambda_{1})<0,
\end{equation}
assuming that the real part of each eigenvalue is less than zero and
hence the fixed point of Eq. (\ref{eq:lin_dyn_sys}) is asymptotically stable. We further assume that the constant matrix $\vec{A}$ is semisimple, i.e., the
algebraic multiplicity of each $\lambda_{k}$ is equal to its geometric
multiplicity: $\text{alg}(\lambda_{k})=\text{geo}(\lambda_{k})$.
We can, therefore, identify $2n$ linearly independent eigenvectors
$\vec{v}_{k}\in\mathbb{C}^{2n}$, with $k=1,\ldots,2n$, each spanning a
real eigenspace $E_{k}\subset\mathbb{R}^{2n}$ with $\text{dim}(E_{k})=2\times\text{alg}(\lambda_{k})$
in case $\text{Im}(\lambda_{k})\neq0$, or $\text{dim}(E_{k})=\text{alg}(\lambda_{k})$
in case $\text{Im}(\lambda_{k})=0$.

\section{Non-autonomous spectral submanifolds and their reduced dynamics}
Because $\mtrx{A}$ is semisimple, the linear part of system (\ref{eq:dyn_sys}) is diagonalized by a linear change of coordinates $\mtrx{x}=\mtrx{T}\mtrx{q}$, with $\mtrx{T}=\left[\mtrx{v}_{1},\mtrx{v}_{2},\ldots,\mtrx{v}_{2n}\right]\in\mathbb{C}^{2n\times2n}$ and $\mtrx{q}\in\mathbb{C}^{2n}$, yielding
\begin{gather}
\dot{\mtrx{q}}=\underbrace{\text{diag}(\lambda_{1},\lambda_{2}\ldots,\lambda_{2n})}_{\vec{\Lambda}}\mtrx{q}+\mtrx{G}_\text{m}(\mtrx{q})+\varepsilon\mtrx{F}_\text{m}(\gmtrx\phi).\label{eq:ds_diag}
\end{gather}
We now consider the two-dimensional modal subspace $\mathcal{E}=\text{span}\left\{ \vec{v}_{1},\vec{v}_{2}\right\} \subset\mathbb{C}^{2n}$ with $\vec{v}_{2}=\bar{\vec{v}}_{1}.$ The remaining linearly independent eigenvectors $\vec{v}_{3},\ldots,\vec{v}_{2n}$ span a complex subspace $\mathcal{C}\subset\mathbb{C}^{2n}$ such that the full phase space of (\ref{eq:ds_diag}) can be expressed as the direct sum
\begin{equation}
\mathbb{C}^{2n}=\mathcal{E}\oplus\mathcal{C}.
\end{equation}
We write the diagonal matrix $\vec{\Lambda}$ as
\begin{equation}
\vec{\Lambda}=\left[\begin{array}{cc}
\vec{\Lambda}_{\mathcal{E}} & 0\\
0 & \vec{\Lambda}_{\mathcal{C}}
\end{array}\right],\quad\text{Spect}\left(\vec{\Lambda}_{\mathcal{E}}\right)=\left\{ \lambda_{1},\lambda_{2}\right\} ,\quad\text{Spect}\left(\vec{\Lambda}_{\mathcal{C}}\right)=\left\{ \lambda_{3},\ldots,\lambda_{2n}\right\} ,\label{eq:lin_decomp}
\end{equation}
with $\vec{\Lambda}_{\mathcal{E}}=\text{diag}(\lambda_{1},\lambda_{2})$
and $\vec{\Lambda}_{\mathcal{C}}=\text{diag}(\lambda_{3},\ldots,\lambda_{2n})$.

Following Haller and Ponsioen \cite{Haller2016}, we now define a \textit{non-autonomous spectral submanifold} (SSM), $\mathcal{W}(\mathcal{E})$, corresponding to the spectral subspace $\mathcal{E}$ of $\vec{\Lambda}$ as a two-dimensional invariant manifold of the dynamical system (\ref{eq:ds_diag}) that:
\begin{itemize}
\item [{(i)}] Perturbs smoothly from $\mathcal{E}$ at the trivial fixed point $\mtrx{q}=\mtrx{0}$ under the addition of the $\mathcal{O}(\varepsilon)$ terms in Eq. (\ref{eq:ds_diag}). 
\item [{(ii)}] Is strictly smoother than any other invariant manifold with the same properties.
\end{itemize}
We also define the \textit{absolute spectral quotient} $\Sigma(\mathcal{E})$ of $\mathcal{E}$ as the positive integer 
\begin{equation}
\Sigma(\mathcal{E})=\text{Int}\left[\frac{\min_{\lambda\in\text{Spect}(\vec{\Lambda})}\text{Re}\lambda}{\max_{\lambda\in\text{Spect}(\vec{\Lambda}_{\mathcal{E}})}\text{Re}\lambda}\right]\in\mathbb{N}^{+}.\label{eq:abs_spect_quo}
\end{equation}
Additionally, we introduce the non-resonance conditions 
\begin{equation}
a\text{Re}\lambda_{1}+b\text{Re}\lambda_{2}\neq\text{Re}\lambda_{l},\quad\forall\lambda_{l}\in\text{Spect}(\vec{\Lambda}_{\mathcal{C}}),\quad 2\leq a+b\leq\Sigma(\mathcal{E}),\quad a,b\in\mathbb{N}_0. \label{eq:ext_res}
\end{equation}
We now restate the following result from Haller and Ponsioen \cite{Haller2016}, for the existence of an SMM in system (\ref{eq:ds_diag}).
\begin{theorem}\label{thrm:SSM} 
Under the non-resonance conditions (\ref{eq:ext_res}), the following hold for system (\ref{eq:ds_diag}):
\begin{itemize}
\item [{(i)}] There exists a unique two-dimensional, time-periodic, analytic SSM, $\mathcal{W}(\mathcal{E})$.
\item [{(ii)}]$\mathcal{W}(\mathcal{E})$ can be viewed as an embedding of an open set $\mathcal{U}$ into the phase space of system (\ref{eq:ds_diag}) via the map
\begin{equation}
\vec{W}(\mtrx{s},\phi):\mathcal{U}\subset\mathbb{C}^{2}\times S^1\rightarrow\mathbb{C}^{2n}.\label{eq:W_map}
\end{equation}
We can approximate $\mtrx{W}(\mtrx{s},\phi)$ in a neighborhood of the origin using a Taylor expansion in the parameterization coordinates $\mtrx{s}$, having coefficients that depend periodically on the phase variable $\phi$.
\item [{(iii)}] There exists a polynomial function $\vec{R}(\mtrx{s},\phi):\mathcal{U}\rightarrow\mathcal{U}$
satisfying the invariance relationship 
\upshape
\begin{equation}
\mtrx{\Lambda} \mtrx{W}(\mtrx{s},\phi)+\mtrx{G}_\text{m}(\mtrx{W}(\mtrx{s},\phi))+\varepsilon\mtrx{F}_\text{m}(\phi)=D_\mtrx{s}\mtrx{W}(\mtrx{s},\phi)\mtrx{R}(\mtrx{s},\phi)+D_{\phi}\mtrx{W}(\mtrx{s},\phi)\Omega,\label{eq:invar}
\end{equation}
\itshape
such that the reduced dynamics on the SSM can be expressed as
\begin{equation}
\dot{\mtrx{s}}=\mtrx{R}(\mtrx{s},\phi). \label{eq:map_R}
\end{equation}
\end{itemize}
\end{theorem}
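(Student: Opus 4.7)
Because this statement is cited as a restatement from \cite{Haller2016}, the natural strategy is to reduce it to the autonomous SSM existence theorem proved there, which is in turn an application of the Cabr\'e--Fontich--de la Llave parameterization method for analytic invariant manifolds. The first step is to lift (\ref{eq:ds_diag}) to an autonomous system on $\mathbb{C}^{2n}\times S^{1}$ by promoting $\phi$ to a state variable with $\dot{\phi}=\Omega$. The trivial fixed point $\mathbf{q}=\mathbf{0}$ is then replaced by the invariant circle $\{\mathbf{0}\}\times S^{1}$, whose linearization is block-diagonal with blocks $\boldsymbol{\Lambda}$ and $0$, and whose spectral subspace of interest is $\mathcal{E}\times S^{1}$. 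The desired $\mathcal{W}(\mathcal{E})$ is the smoothest two-dimensional analytic invariant manifold through this circle tangent to $\mathcal{E}\times S^{1}$.

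\textbf{Construction.} I would then obtain the embedding $\mathbf{W}$ and the reduced vector field $\mathbf{R}$ simultaneously from the invariance equation (\ref{eq:invar}), which follows simply by differentiating $\mathbf{q}=\mathbf{W}(\mathbf{s},\phi)$ along trajectories satisfying $\dot{\mathbf{s}}=\mathbf{R}(\mathbf{s},\phi)$ and $\dot{\phi}=\Omega$ and equating the result with the right-hand side of (\ref{eq:ds_diag}) evaluated at $\mathbf{W}$. Writing
\begin{equation}
\mathbf{W}(\mathbf{s},\phi)=\sum_{|\mathbf{m}|\geq 1}\mathbf{W}_{\mathbf{m}}(\phi)\,\mathbf{s}^{\mathbf{m}},\qquad \mathbf{R}(\mathbf{s},\phi)=\boldsymbol{\Lambda}_{\mathcal{E}}\mathbf{s}+\sum_{|\mathbf{m}|\geq 2}\mathbf{R}_{\mathbf{m}}(\phi)\,\mathbf{s}^{\mathbf{m}},
\end{equation}
and matching the $l$-th component of the coefficient of $\mathbf{s}^{\mathbf{m}}=s_{1}^{a}s_{2}^{b}$ at each order yields a recursive hierarchy of linear cohomological equations of the schematic form
\begin{equation}
\Omega\,\partial_{\phi}\mathbf{W}_{\mathbf{m},l}(\phi)+\bigl(a\lambda_{1}+b\lambda_{2}-\lambda_{l}\bigr)\mathbf{W}_{\mathbf{m},l}(\phi)=\mathbf{N}_{\mathbf{m},l}(\phi)-\mathbf{R}_{\mathbf{m},l}(\phi),
\end{equation}
in which $\mathbf{N}_{\mathbf{m},l}$ depends only on lower-order data. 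For $l\in\{1,2\}$ the resonant part of the right-hand side is absorbed into $\mathbf{R}_{\mathbf{m},l}$, while for $l\geq 3$ the non-resonance condition (\ref{eq:ext_res}) makes the operator on the left invertible on the space of $2\pi/\Omega$-periodic functions, so $\mathbf{W}_{\mathbf{m},l}$ is uniquely determined and one may take $\mathbf{R}_{\mathbf{m},l}=0$ there. Iterating through order $\Sigma(\mathcal{E})$ and beyond fixes all Taylor coefficients of $\mathbf{W}$ and $\mathbf{R}$.

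\textbf{Main obstacle.} The genuine difficulty is not the formal recursion but (a) the convergence of the formal series to an analytic embedding on an open neighborhood, and (b) the uniqueness claim in (i). Convergence is where the analytic parameterization method of Cabr\'e--Fontich--de la Llave enters: the strict spectral gap provided by $\mathrm{Re}\,\lambda_{k}<0$, together with the absolute spectral quotient $\Sigma(\mathcal{E})$, yields the majorant estimates that upgrade the formal series to a genuine analytic invariant manifold. Uniqueness follows from the smoothness characterization (ii): any competing invariant manifold of class strictly above $C^{\Sigma(\mathcal{E})}$ tangent to $\mathcal{E}\times S^{1}$ at $\{\mathbf{0}\}\times S^{1}$ must agree order by order with $\mathbf{W}$, since the resonant denominators ruled out by (\ref{eq:ext_res}) cannot reappear and no additional freedom is available in the cohomological equations. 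Invoking the corresponding theorem of \cite{Haller2016} then closes the proof; the concrete work on my side reduces to verifying that (\ref{eq:ext_res}) matches the non-resonance hypothesis of that theorem and that analyticity of $\mathbf{g}$ transfers through the linear diagonalizing change of variables to $\mathbf{G}_{\mathrm{m}}$ and $\mathbf{F}_{\mathrm{m}}$.
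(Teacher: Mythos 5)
Your proposal is correct and takes essentially the same route as the paper: the paper's own proof of Theorem \ref{thrm:SSM} is a one-line reduction to the main theorem of Haller and Ponsioen \cite{Haller2016}, which in turn rests on the parameterization results of Cabr\'e et al.\ \cite{Cabre2003,Cabre2003b,Cabre2005}, exactly as you identify. The additional detail you supply (the order-by-order cohomological equations with resonant terms absorbed into $\mtrx{R}$) is consistent with the coefficient-equation machinery the paper itself develops in Appendix \ref{app:coeff_eq}.
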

\begin{proof}:
We have simply restated the main theorem by Haller and Ponsioen \cite{Haller2016}, which is based on the more abstract results of Cabré et al. \cite{Cabre2003,Cabre2003b,Cabre2005} for mappings on Banach spaces. \qed
\end{proof}
We give an illustration of a time-periodic SSM in Fig. \ref{fig:SSM_illu}. We have assumed a case in which the SSM has three limit cycles for a given forcing frequency, with two of these limit cycles contained in an isola. The SSM approach can be viewed as a refinement and extension of the seminal work of Shaw and Pierre \cite{Shaw1993}, who envision nonlinear normal modes as invariant manifolds that are locally graphs over two-dimensional modal subspaces of the linearized system. We explain in detail how to construct time-periodic SSMs in Appendix \ref{app:coeff_eq}. 
\begin{figure}[H]
\centering
    \begin{subfigure}{1\textwidth}
        \centering
         \if\mycmd1
         \includegraphics[scale=0.5]{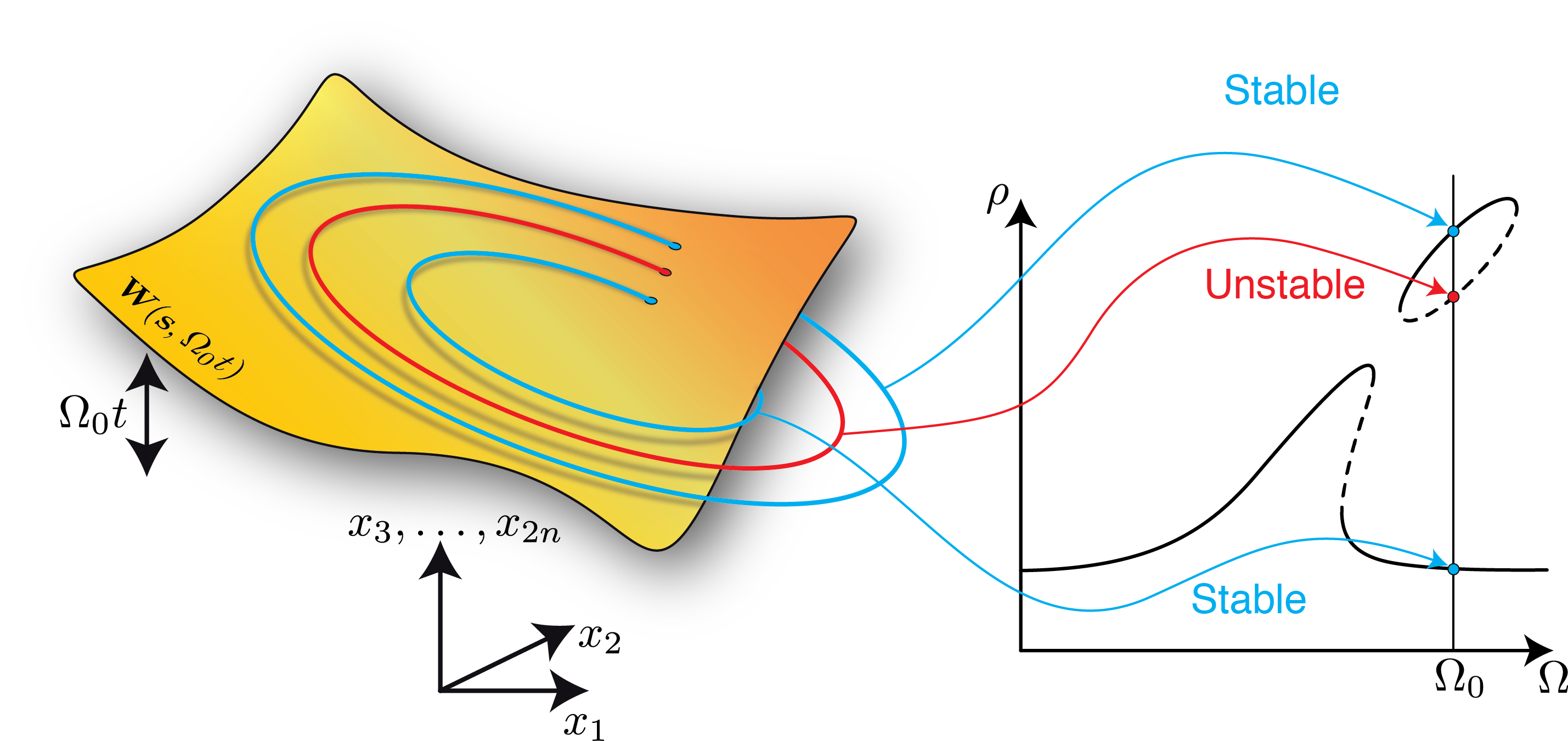}
         \fi
    \end{subfigure}
    \caption{Illustration of a time-periodic SSM. For a given forcing frequency $\Omega$, we illustrate how the SSM may contain three limit cycles, of which two fall in an isola. \label{fig:SSM_illu}}
\end{figure}
Our next result concerns the dynamics on the SSM described in Theorem \ref{thrm:SSM}
\begin{theorem}\label{thm:red_dyn}
The dynamics on the two-dimensional SSM given in Theorem \ref{thrm:SSM} can approximately be written in polar coordinates $(\rho,\psi)$ as
\begin{align}
\dot{\rho}&=a(\rho)+\varepsilon\left(f_1(\rho,\Omega)\cos(\psi)+f_2(\rho,\Omega)\sin(\psi)\right),\label{eq:red1_orig} \\
\dot{\psi}&=(b(\rho)-\Omega)+\frac{\varepsilon}{\rho}\left(g_1(\rho,\Omega)\cos(\psi)-g_2(\rho,\Omega)\sin(\psi)\right),\label{eq:red2_orig}
\end{align}
where
\upshape
\begin{align}
a(\rho)   &= \text{Re}(\lambda_1)\rho+\sum_{i=1}^M\text{Re}(\gamma_i)\rho^{2i+1},\label{eq:auto_a} \\
b(\rho)   &= \text{Im}(\lambda_1)+\sum_{i=1}^M\text{Im}(\gamma_i)\rho^{2i},  \\
f_1(\rho,\Omega) &= \text{Re}(c_{1,\mtrx{0}}) + \sum_{i=1}^M\left(\text{Re}(c_{1,(i,i)}(\Omega))+\text{Re}(d_{1,(i+1,i-1)}(\Omega))\right)\rho^{2i}, \\
f_2(\rho,\Omega) &= \text{Im}(c_{1,\mtrx{0}}) + \sum_{i=1}^M\left(\text{Im}(c_{1,(i,i)}(\Omega))-\text{Im}(d_{1,(i+1,i-1)}(\Omega))\right)\rho^{2i}, \\
g_1(\rho,\Omega) &= \text{Im}(c_{1,\mtrx{0}}) + \sum_{i=1}^M\left(\text{Im}(c_{1,(i,i)}(\Omega))+\text{Im}(d_{1,(i+1,i-1)}(\Omega))\right)\rho^{2i}, \\
g_2(\rho,\Omega) &= \text{Re}(c_{1,\mtrx{0}}) + \sum_{i=1}^M\left(\text{Re}(c_{1,(i,i)}(\Omega))-\text{Re}(d_{1,(i+1,i-1)}(\Omega))\right)\rho^{2i}, \label{eq:g2}
\end{align}
\itshape
with $2M+1$ denoting the order of the expansion. 
\end{theorem}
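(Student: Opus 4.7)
The plan is to start from Theorem~\ref{thrm:SSM}, which delivers the two-dimensional reduced flow $\dot{\mtrx{s}}=\mtrx{R}(\mtrx{s},\phi)$ on $\mathcal{W}(\mathcal{E})$, and to derive the polar form in three steps: (i) bring $\mtrx{R}$ into a Poincaré-type normal form, (ii) identify the near-resonant non-autonomous monomials, and (iii) transform to polar coordinates co-rotating with the forcing before separating real and imaginary parts.

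First I would use that $\mathcal{E}$ is spanned by the complex-conjugate pair $\mtrx{v}_1, \mtrx{v}_2 = \bar{\mtrx{v}}_1$ to parameterize the SSM by $\mtrx{s} = (s_1, s_2)$ with $s_2 = \bar s_1$ along real physical solutions. At $\varepsilon = 0$, the only internally resonant (hence non-removable) monomials in $\dot s_1$ are $s_1^{i+1} \bar s_1^i$, so a near-identity change of coordinates brings the autonomous part of $\mtrx{R}$ into the form
\[
\dot s_1 = \lambda_1 s_1 + \sum_{i=1}^{M} \gamma_i\, s_1^{i+1} \bar s_1^i + O(|\mtrx{s}|^{2M+3}),
\]
together with its complex conjugate for $\dot s_2$. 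Writing $s_1 = \rho e^{i\theta}$ and separating real from imaginary parts then yields $\dot\rho = a(\rho)$ and $\dot\theta = b(\rho)$ with precisely the $a, b$ stated in the theorem.

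Next I would expand the $O(\varepsilon)$ contribution to $\mtrx{R}$ as a Taylor series in $(s_1, \bar s_1)$ whose coefficients are Fourier series in $\phi$, with generic monomial $h_{a,b,k}(\Omega)\, s_1^a \bar s_1^b e^{ik\phi}$. Such a monomial is removable from $\dot s_1$ by a near-identity $\phi$-dependent transformation exactly when its small denominator $(a-1)\lambda_1 + b\lambda_2 + ik\Omega$ is bounded away from zero. Near the $1{:}1$ resonance $\Omega \approx \mathrm{Im}(\lambda_1)$, the non-removable terms satisfy $k+(a-b)=1$, so for the leading $k = \pm 1$ harmonics of $\mtrx{F}_\text{m}(\phi)$ only two families survive: $(a,b) = (i,i)$ with $k = +1$, contributing coefficients $c_{1,(i,i)}(\Omega)$ for $i \geq 0$ (the $i = 0$ case being the constant $c_{1,\mtrx{0}}$), and $(a,b)=(i+1,i-1)$ with $k=-1$, contributing $d_{1,(i+1,i-1)}(\Omega)$ for $i \geq 1$.

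Finally I would pass to the co-rotating polar coordinate $\psi = \theta - \phi$, use $\dot\phi = \Omega$ to get $\dot\theta = \dot\psi + \Omega$, and substitute $s_1 = \rho e^{i(\psi+\phi)}$ into the $\dot s_1$ equation. After dividing through by the common factor $e^{i\theta}$, the two non-autonomous families contribute $c_{1,(i,i)}(\Omega)\rho^{2i} e^{-i\psi}$ and $d_{1,(i+1,i-1)}(\Omega)\rho^{2i} e^{i\psi}$ respectively. Expanding $e^{\pm i\psi} = \cos\psi \pm i\sin\psi$ and matching the real and imaginary parts of $\dot\rho + i\rho(\dot\psi + \Omega)$ to these contributions produces the specific Re/Im combinations displayed for $f_1, f_2, g_1, g_2$; the opposite signs between $(f_1, g_1)$ and $(f_2, g_2)$ arise because $c_{1,(i,i)}$ rides on $e^{-i\psi}$ while $d_{1,(i+1,i-1)}$ rides on $e^{+i\psi}$. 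I expect the main obstacle to be a careful justification in step two: one must confirm that the candidate near-identity transformation removing non-resonant monomials is compatible with the existence and uniqueness of the SSM from Theorem~\ref{thrm:SSM} (so that it corresponds to a legitimate re-parameterization of $\mtrx{W}$), and that the discarded contributions (higher forcing harmonics, non-resonant Taylor monomials, and $O(\varepsilon^2)$ feedback from $\mtrx{W}$ into $\mtrx{R}$) are all absorbed into the error hidden in the word ``approximately''. The polar manipulation in step three is then routine.
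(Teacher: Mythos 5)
Your proposal is correct and follows essentially the same route as the paper: the paper's Appendix~\ref{app:red_dyn} performs exactly your step~(iii) (substituting $s_1=\rho\me^{\mi\theta}$, passing to $\psi=\theta-\phi$, dividing by $\me^{\mi\theta}$ and splitting real and imaginary parts), while your steps~(i)--(ii) reproduce the resonance analysis of Appendix~\ref{app:coeff_eq} (Table~\ref{tab:res_non_auto}), where the surviving monomials $\gamma_i s_1^{i+1}s_2^i$, $c_{1,(i,i)}s_1^is_2^i\me^{\mi\phi}$ and $d_{1,(i+1,i-1)}s_1^{i+1}s_2^{i-1}\me^{-\mi\phi}$ are identified by the same small-denominator condition you state. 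The compatibility issue you flag is resolved in the paper not by a post-hoc near-identity transformation but by choosing the coefficients $c_{j,\mtrx{k}},d_{j,\mtrx{k}}$ of $\mtrx{R}_1$ during the solution of the invariance equation (\ref{eq:coef_EQ_W1}), so that the resonant terms are absorbed into the reduced dynamics rather than into $\mtrx{W}_1$ -- the standard normal-form style of the parameterization method, which is consistent with Theorem~\ref{thrm:SSM}.
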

\begin{proof}:
We derive this result in Appendix \ref{app:red_dyn}. \qed
\end{proof}

In the unforced limit ($\varepsilon=0$), the reduced system (\ref{eq:red1_orig})-(\ref{eq:red2_orig}) can have fixed points but no nontrivial periodic orbits. This is because (\ref{eq:red1_orig}) decouples from (\ref{eq:red2_orig}), representing a one-dimensional ordinary differential equation that cannot have non-constant periodic solutions. By construction, the trivial fixed point of (\ref{eq:red1_orig})-(\ref{eq:red2_orig}) is asymptotically stable and will persist for $\varepsilon>0$. These persisting fixed points satisfy the system of equations
\begin{equation}
\mtrx{F}(\mtrx{u}) = 
\begin{bmatrix}
F_1(\mtrx{u})\\
F_2(\mtrx{u})
\end{bmatrix} = 
\begin{bmatrix}
a(\rho) + \varepsilon\left(f_1(\rho,\Omega)\cos(\psi)+f_2(\rho,\Omega)\sin(\psi)\right) \\
(b(\rho)-\Omega)\rho + \varepsilon\left(g_1(\rho,\Omega)\cos(\psi)-g_2(\rho,\Omega)\sin(\psi)\right) 
\end{bmatrix} = \mtrx{0}, \label{eq:zeroproblem}\\
\end{equation}
where
\begin{equation}
\mtrx{F}(\mtrx{u}):\mathbb{R}^3 \rightarrow \mathbb{R}^2, \quad
\mtrx{u} = 
\begin{bmatrix}
\rho \\
\Omega\\ 
\psi
\end{bmatrix}. \nonumber
\end{equation}

As we show in Appendix \ref{app:ext_fr}, under appropriate non-degeneracy conditions, the zeros of (\ref{eq:zeroproblem}) form a one-dimensional manifold, which, after a projection onto the amplitude-frequency space, will represent the FRC. The stability of these fixed points (which correspond to periodic solutions of the full mechanical system) is determined by the real parts of the eigenvalues of the Jacobian of $\mtrx{F}(\mtrx{u})$. 

\begin{theorem}\label{thm:gen_FR}The amplitude $\rho$ of the $T$-periodic orbits of the reduced dynamics (\ref{eq:red1_orig})-(\ref{eq:red2_orig}) are given by the zeros of the function 
\begin{equation}
G(\rho;\Omega)=(b(\rho)-\Omega)\rho + \varepsilon\left(g_1(\rho,\Omega)\frac{1-K^\pm(\rho;\Omega)^2}{1+K^\pm(\rho;\Omega)^2}-g_2(\rho,\Omega)\frac{2K^\pm(\rho;\Omega)}{1+K^\pm(\rho;\Omega)^2}\right)=0, \label{eq:G_thrm}
\end{equation}
where
\begin{equation}
K^\pm(\rho;\Omega)=\frac{-\varepsilon f_2(\rho,\Omega)\pm \sqrt{\varepsilon^2\left(f_1(\rho,\Omega)^2+f_2(\rho,\Omega)^2\right)-a(\rho)^2}}{a(\rho) - \varepsilon f_1(\rho,\Omega)}.\label{eq:K_thrm}
\end{equation}
\end{theorem}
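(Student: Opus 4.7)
The plan is to observe that, in the co-rotating coordinates already used by equations (\ref{eq:red1_orig})--(\ref{eq:red2_orig}), every $T=2\pi/\Omega$-periodic orbit of the reduced SSM dynamics corresponds to a fixed point of the planar autonomous system, i.e.\ to a zero of $\mtrx{F}(\mtrx{u})$ in (\ref{eq:zeroproblem}). Consequently the task reduces to algebraically eliminating the phase $\psi$ from the two equations $F_1=0$ and $F_2=0$, collapsing the pair into a single scalar condition $G(\rho;\Omega)=0$ on the amplitude.

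My first step would be to apply the Weierstrass half-angle substitution
\begin{equation*}
K=\tan(\psi/2),\qquad \cos\psi=\frac{1-K^2}{1+K^2},\qquad \sin\psi=\frac{2K}{1+K^2},
\end{equation*}
to $F_1(\mtrx{u})=0$. Multiplying through by $1+K^2$ converts the first equation into a quadratic in $K$,
\begin{equation*}
\bigl(a(\rho)-\varepsilon f_1(\rho,\Omega)\bigr)K^2+2\varepsilon f_2(\rho,\Omega)\,K+\bigl(a(\rho)+\varepsilon f_1(\rho,\Omega)\bigr)=0,
\end{equation*}
whose two roots, by the standard quadratic formula, are precisely the expressions $K^\pm(\rho;\Omega)$ stated in (\ref{eq:K_thrm}). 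Next I would substitute these $K^\pm$ back into $F_2(\mtrx{u})=0$, using the same Weierstrass identities for $\cos\psi$ and $\sin\psi$, which immediately yields
\begin{equation*}
(b(\rho)-\Omega)\rho+\varepsilon\left(g_1(\rho,\Omega)\frac{1-(K^\pm)^2}{1+(K^\pm)^2}-g_2(\rho,\Omega)\frac{2K^\pm}{1+(K^\pm)^2}\right)=0,
\end{equation*}
which is exactly $G(\rho;\Omega)=0$ in the form (\ref{eq:G_thrm}). Retaining both $\pm$ branches is necessary because each picks out one of the two admissible phase angles $\psi$ compatible with a given amplitude $\rho$, and together they enumerate all phase-corrected fixed points of the reduced flow.

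Two technical subtleties remain. First, the Weierstrass parametrization does not cover $\psi=\pi$; this missing value can be reinserted as a direct limit of $K^\pm\to\infty$ and inspection shows it contributes no additional zeros beyond those produced by $G$. Second, and most delicate, is the singular locus $a(\rho)-\varepsilon f_1(\rho,\Omega)=0$, where the leading coefficient of the quadratic vanishes; this is the main obstacle. I would handle it by rationalising the quadratic formula (multiplying numerator and denominator by the conjugate of the discriminant) so that $K^\pm$ extends continuously through the singularity, or equivalently by treating the degenerate case separately as a linear equation in $K$ yielding a single admissible root that one checks still satisfies (\ref{eq:G_thrm}). The non-degeneracy assumption that the discriminant $\varepsilon^2(f_1^2+f_2^2)-a^2$ be nonnegative at the fixed point is required for real solutions to exist and thus delimits the region of amplitudes $\rho$ over which $G$ can have zeros; this condition is exactly the solvability requirement for the original $F_1=0$ equation and needs no additional argument.
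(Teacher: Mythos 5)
Your proposal is correct and follows essentially the same route as the paper's own proof in Appendix E: the tangent half-angle substitution applied to $F_1=0$ yields the quadratic (\ref{eq:quadK}) in $K$, whose roots $K^\pm$ are then substituted into $F_2=0$ to produce (\ref{eq:G_thrm}). Your additional remarks on the $\psi=\pi$ chart failure and the degenerate leading coefficient $a(\rho)-\varepsilon f_1=0$ are sensible refinements that the paper omits, but they do not change the argument.
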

\begin{proof}
We derive this result in Appendix \ref{app:impl_eq}.\qed
\end{proof}
The zero-level set of Eq. (\ref{eq:G_thrm}) yields the forced-response curve in the $(\Omega,\rho)$-space. This curve will consist of two segments obtained from $K^+$ and $K^-$ in Eq. (\ref{eq:K_thrm}). The two segments meet exactly at the point where the square root term in the definition of $K^\pm(\rho;\Omega)$ is equal to zero. We sketch this for a damped, nonlinear, periodically forced mechanical system in Fig. \ref{fig:FRC_intersection_impl}. 

\begin{figure}
\centering
    \begin{subfigure}{1\textwidth}
        \centering
         \if\mycmd1
         \includegraphics[scale=0.3]{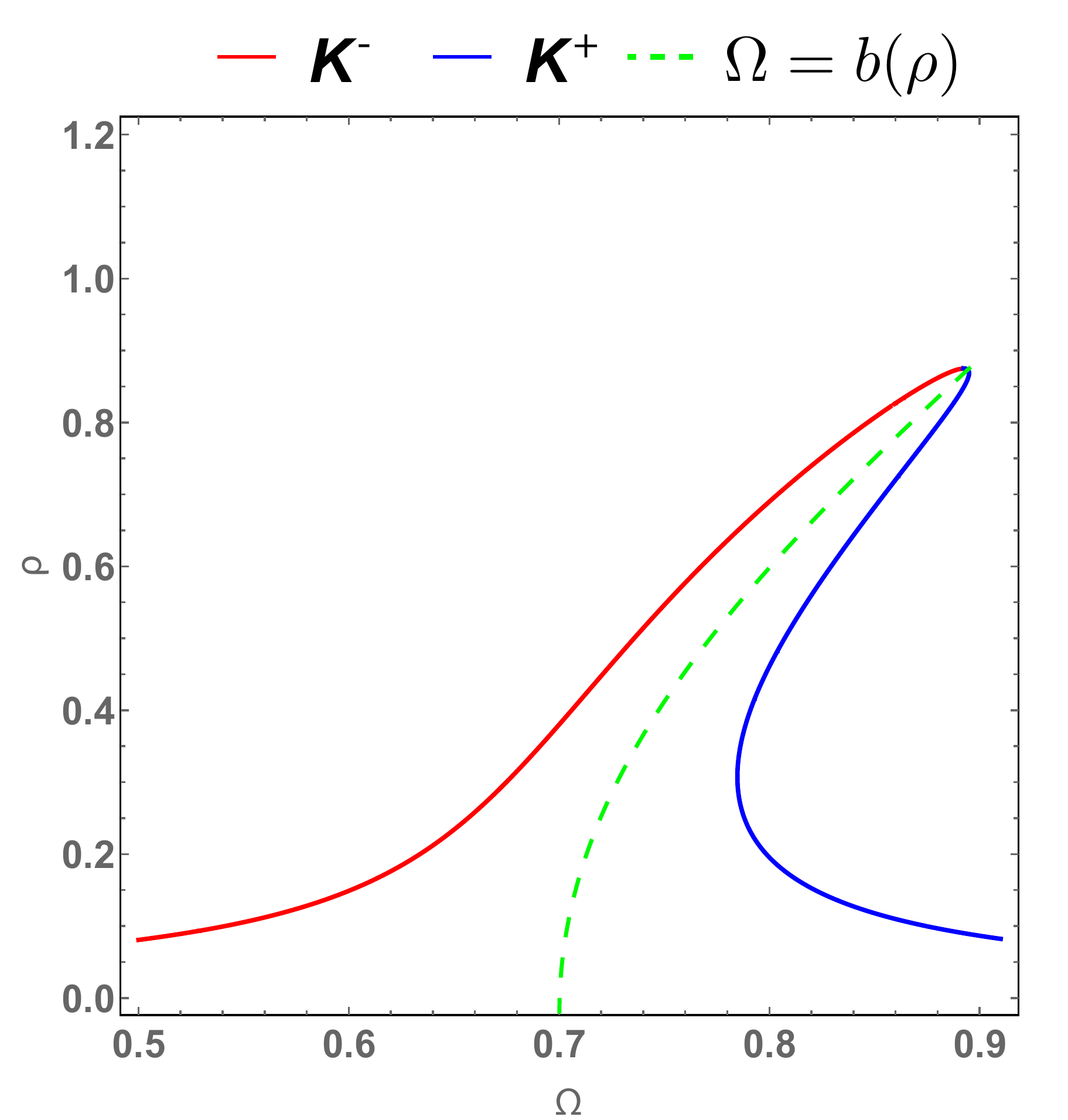}
         \fi
    \end{subfigure}
    \caption{Example of the zero-level set of Eq. (\ref{eq:G_thrm}) for a damped, non-linear, periodically forced mechanical system  with a hardening nonlinearity. The blue and red curves correspond to $K^+$ and $K^-$. These two segments come together exactly at the point where the discriminant of the quadratic Eq. (\ref{eq:quadK}) is equal to zero.  \label{fig:FRC_intersection_impl}}
\end{figure}
Because the isolated branches of periodic solutions are also a part of the FRC, the zero-level set of $G(\rho;\Omega)$ can predict isolas as well. In contrast, detecting isolas by numerical continuation requires one to start on the isola and hence assumes a priori knowledge of an isolated branch of periodic solutions. We will discuss this in section \ref{sec:isola}.

\section{Analytic criterion for isolas \label{sec:isola}}
We will now give an analytic criterion for the emergence of isolas in terms of the function $a(\rho)$ defined in equation (\ref{eq:auto_a}). An essential question is how approximate zeros obtained from finite-order Taylor series expansions persist as $M\rightarrow\infty$. Jentzsch \cite{jentzsch1916untersuchungen} proved that in the limit of the order of the Taylor series expansion going to infinity, the non-persistent spurious zeros come arbitrarily close to the boundary of the domain of convergence. Hurwitz \cite{hurwitz1888ueber} showed that in the same limit, the uniform convergence of the Taylor series polynomial leads to a good approximation of the genuine zeros inside the circle of convergence. Christiansen and Madsen\cite{Christiansen2006} numerically verified this behavior on several examples. We will call such a zero $\rho_0$ of $a(\rho)$, non-spurious if it converges to a genuine zero of $a(\rho)$ in the limit of $M\rightarrow\infty$.

\begin{theorem}\label{thrm:birth_isola}
Assume that $\rho_0\neq 0$ is a non-spurious transverse zero of $a(\rho)$, i.e.,
\begin{equation}
a(\rho_0) = 0, \quad \partial_\rho a(\rho_0) \neq 0.
\end{equation} 
Then, for $\varepsilon>0$ small enough, system (\ref{eq:zeroproblem}) has an isola that perturbs from the unforced damped backbone curve $\Omega=b(\rho)$ near the amplitude value $\rho_0$. 
\end{theorem}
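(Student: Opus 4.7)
The plan is to lift the fixed-point problem (\ref{eq:zeroproblem}) into the full three-dimensional $(\rho,\Omega,\psi)$-space rather than work with the implicit form $G(\rho;\Omega)$ from Theorem~\ref{thm:gen_FR}. At $\varepsilon=0$ the two equations $F_1=a(\rho)=0$ and $F_2=(b(\rho)-\Omega)\rho=0$ are satisfied identically on the circle
\begin{equation*}
\mathcal{C}_0=\{(\rho_0,\,b(\rho_0),\,\psi):\psi\in S^1\},
\end{equation*}
since the transverse zero $\rho_0\neq 0$ of $a$ forces $\rho=\rho_0$, which in turn forces $\Omega=b(\rho_0)$, while $\psi$ remains free. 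The goal is to show that $\mathcal{C}_0$ perturbs smoothly for $\varepsilon>0$ to a nearby closed curve of zeros of $\mtrx{F}$, whose projection onto the $(\Omega,\rho)$-plane is the desired isola.

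The central step is an application of the implicit function theorem to $\mtrx{F}=\mtrx{0}$ in the unknowns $(\rho,\Omega)$, treating $(\psi,\varepsilon)$ as parameters. A direct computation at any point of $\mathcal{C}_0$ yields
\begin{equation*}
D_{(\rho,\Omega)}\mtrx{F}\bigl|_{\varepsilon=0,\,\rho=\rho_0,\,\Omega=b(\rho_0)} = \begin{pmatrix} a'(\rho_0) & 0 \\ b'(\rho_0)\,\rho_0 & -\rho_0 \end{pmatrix},
\end{equation*}
whose determinant is $-a'(\rho_0)\rho_0\neq 0$ by the transversality hypothesis and $\rho_0\neq 0$. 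Hence for every $\psi\in S^1$ and all sufficiently small $\varepsilon$ the IFT produces unique smooth functions $\rho(\psi,\varepsilon),\Omega(\psi,\varepsilon)$ with $\rho(\psi,0)=\rho_0$, $\Omega(\psi,0)=b(\rho_0)$ and $\mtrx{F}(\rho(\psi,\varepsilon),\Omega(\psi,\varepsilon),\psi)=\mtrx{0}$. A first-order expansion in $\varepsilon$ gives
\begin{equation*}
\rho(\psi,\varepsilon) = \rho_0 - \frac{\varepsilon}{a'(\rho_0)}\bigl(f_1\cos\psi + f_2\sin\psi\bigr) + O(\varepsilon^2),
\end{equation*}
with $f_1,f_2$ evaluated at $(\rho_0,b(\rho_0))$, and an analogous expression for $\Omega(\psi,\varepsilon)$ obtained from the second equation. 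Provided the mild non-degeneracy condition $(f_1,f_2)(\rho_0,b(\rho_0))\neq\mtrx{0}$ holds, the map $\psi\mapsto(\Omega(\psi,\varepsilon),\rho(\psi,\varepsilon))$ therefore traces a genuine smooth closed loop in the amplitude--frequency plane, of diameter $O(\varepsilon)$ and centered at $(b(\rho_0),\rho_0)$.

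To finish, one verifies that this loop is truly disconnected from the main FRC. Since the perturbed zero set sitting over $\mathcal{C}_0$ is contained in an $O(\varepsilon)$-tube about $(\rho_0,b(\rho_0))$, while $\rho_0\neq 0$ is uniformly bounded away from the trivial fixed point $\rho=0$ about which the main resonance peak is organized, the two branches must be disjoint once $\varepsilon$ is taken small enough. The non-spuriousness of $\rho_0$ ensures in addition that the isola predicted from the order-$2M{+}1$ truncation persists in the $M\to\infty$ limit, so the construction identifies a genuine isola of the full reduced dynamics on the SSM. The main delicate point is not the IFT application itself but a clean statement of the implicit non-degeneracy $(f_1,f_2)\neq\mtrx{0}$ at the unperturbed solution: without this the IFT still produces a zero curve in $(\rho,\Omega,\psi)$-space, but its projection to the amplitude--frequency plane could degenerate to a single point instead of a loop.
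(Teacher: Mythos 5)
Your argument is essentially identical to the paper's proof: both solve $\mtrx{F}=\mtrx{0}$ for $(\rho,\Omega)$ as functions of the parameters $(\psi,\varepsilon)$ via the implicit function theorem, using invertibility of the very same Jacobian $D_{(\rho,\Omega)}\mtrx{F}$ with determinant $-\partial_\rho a(\rho_0)\,\rho_0\neq 0$, and then read off the isola as the $\psi$-parameterized loop at fixed small $\varepsilon>0$. Your additional remark that one needs $(f_1,f_2)(\rho_0,b(\rho_0))\neq\mtrx{0}$ for the projected curve to be a genuine loop rather than collapse to a point is a fair observation of a hypothesis the paper leaves implicit, but it does not change the route of the proof.
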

\begin{proof}
We derive this result in Appendix \ref{app:birth_isola}. \qed
\end{proof}

In order to verify if a non-trivial zero of the Taylor series expansion of $a(\rho)$ is also non-spurious, we compute the (generally complex) zeros of the function $a(\rho)$ for increasing order of approximation $M$. As we discussed before Theorem \ref{thrm:birth_isola}, spurious zeros will converge to the circle defining the radius of convergence of $a(\rho)$, whereas non-spurious zeros stay bounded away from that circle and hence converge to the genuine zeros of $a(\rho)$.

In Fig. \ref{fig:inter_perb}, we sketch qualitatively the statement of Theorem  \ref{thrm:birth_isola}: a non-spurious, transverse zero $\rho_0$ of $a(\rho)$ indicates a nearby isola.
\begin{figure}
\centering
    \begin{subfigure}{0.45\textwidth}
        \centering
        \if\mycmd1
        \includegraphics[scale=0.6]{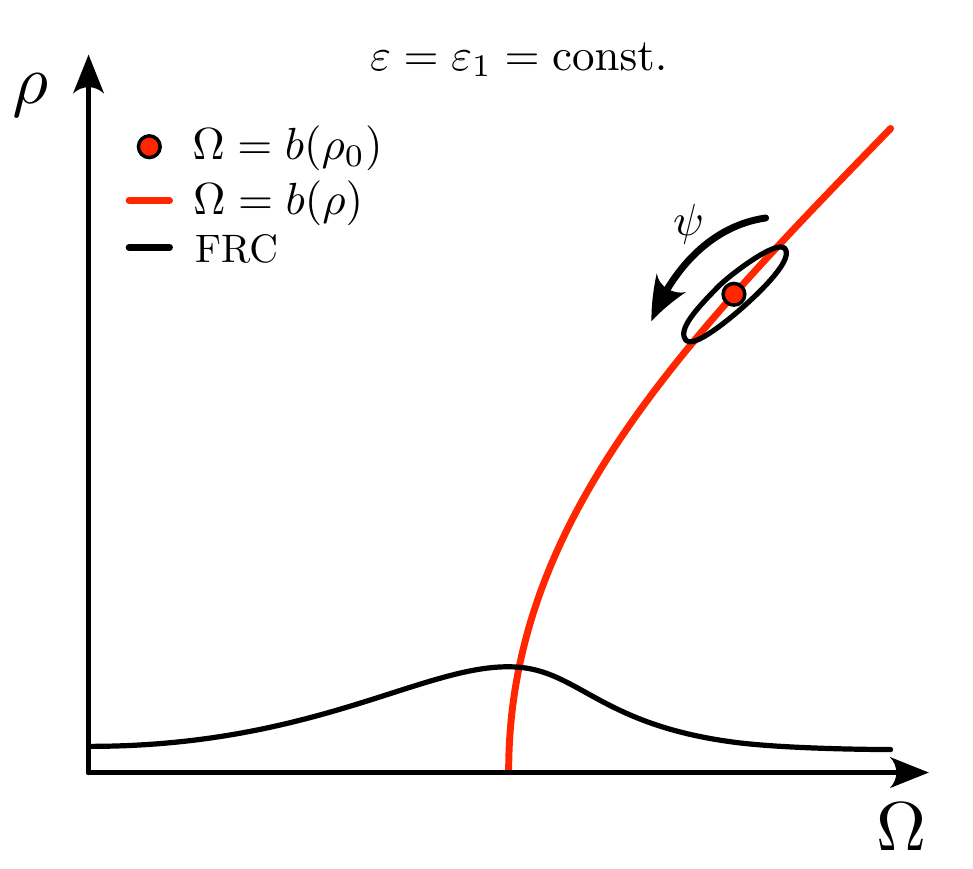}
        \fi
    \end{subfigure}
    \begin{subfigure}{0.45\textwidth}
        \centering
        \if\mycmd1
        \includegraphics[scale=0.6]{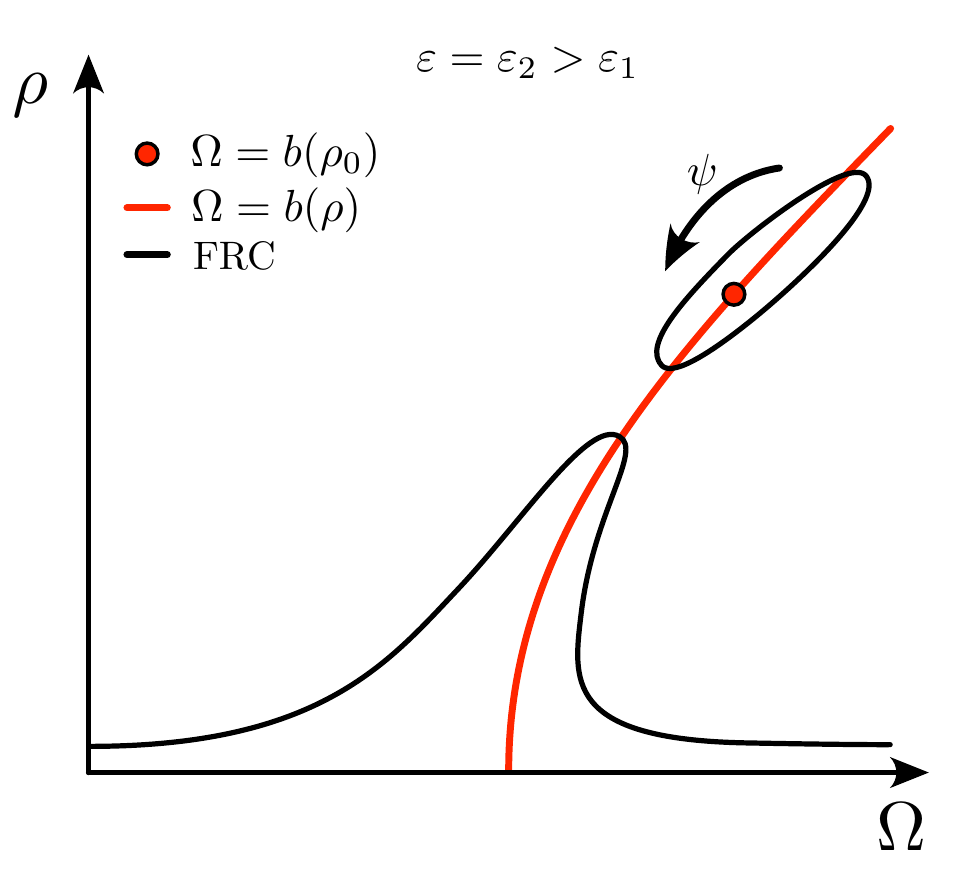}
        \fi
    \end{subfigure}
    \caption{An isola of periodic solutions near the damped backbone curve, emerging from a non-trivial transverse zero $\rho_0$ of $a(\rho)$. The isola curve can be parameterized by the variable $\psi$.  \label{fig:inter_perb}}
\end{figure}

\subsection{Leading-order analytic formula for isolas}
For higher-order approximations of $a(\rho)$, we can determine the roots of $a(\rho)$ numerically. Restricting ourselves to a $3^\text{rd}$-order approximation of the SSM, we can, however, extract even an analytic criterion for the existence of an isola. Following the work of Breunung and Haller \cite{Breunung2017}, we truncate the parameterization $\mtrx{W}(\mtrx{s},\phi)$ and the reduced dynamics $\mtrx{R}(\mtrx{s},\phi)$ at $\mathcal{O}(\varepsilon|\mtrx{s}|,\varepsilon^2)$, which they justify by introducing the scaling $\mtrx{s}\rightarrow \varepsilon^{\frac{1}{4}}\mtrx{s}$, such that the zero problem of the reduced system can be written as


\begin{align}
\tilde{\mtrx{F}}(\mtrx{u}) = 
\begin{bmatrix}
\underbrace{\text{Re}(\lambda_1)\rho + \text{Re}(\gamma_1)\rho^3}_{a(\rho)} + \varepsilon\left(\text{Re}(c_{1,\mtrx{0}})\cos(\psi)+\text{Im}(c_{1,\mtrx{0}})\sin(\psi)\right) \\
(\underbrace{\text{Im}(\lambda_1)+ \text{Im}(\gamma_1)\rho^2}_{b(\rho)}-\Omega)\rho  + \varepsilon\left(\text{Im}(c_{1,\mtrx{0}})\cos(\psi)-\text{Re}(c_{1,\mtrx{0}})\sin(\psi)\right) 
\end{bmatrix} = \mtrx{0}. \label{eq:zeroredsys}
\end{align}
Here we have $f_1 = g_2= \text{Re}(c_{1,\mtrx{0}})$ and $f_2 = g_1 = \text{Im}(c_{1,\mtrx{0}})$. We now show in Theorem \ref{thrm:isola} that this approximation gives an analytically computable condition for the existence of an isola.  

\begin{theorem}\label{thrm:isola}
Assume that for system (\ref{eq:zeroredsys}), $\textup{Re}(\gamma_1)>0$ is satisfied and the cubic-order zero $\rho_1=\sqrt{|\textup{Re}(\lambda_1)|/\textup{Re}(\gamma_1)}$ of $a(\rho)$ is non-spurious. Then the following holds:
\begin{itemize}
\item[{(i)}]For $\varepsilon>0$ small enough, an isola of the type described in Theorem \ref{thrm:birth_isola} exists near the point $(\Omega,\rho)=(b(\rho_0),\rho_0)$ of the damped backbone curve.
\item [{(ii)}] The isola will be disconnected from the main FRC for $\varepsilon>0$ values satisfying
\begin{equation}
\varepsilon < \frac{1}{\norm{c_{1,\mtrx{0}}}}\sqrt{\frac{4|\textup{Re}(\lambda_1)|^3}{27 \textup{Re}(\gamma_1)}}.
\end{equation}
\item [{(iii)}] The isola will merge with the main FRC approximately at the $\varepsilon$ value
\begin{equation}
\varepsilon_\textup{m} =  \frac{1}{\norm{c_{1,\mtrx{0}}}}\sqrt{\frac{4|\textup{Re}(\lambda_1)|^3}{27 \textup{Re}(\gamma_1)}}. \label{eq:isola_merge}
\end{equation}
\end{itemize}
\end{theorem}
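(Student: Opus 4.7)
Part (i) reduces immediately to Theorem \ref{thrm:birth_isola}. I would first verify transversality of the claimed zero: at cubic order $a(\rho)=\mathrm{Re}(\lambda_1)\rho+\mathrm{Re}(\gamma_1)\rho^3$, so $\partial_\rho a(\rho)=\mathrm{Re}(\lambda_1)+3\mathrm{Re}(\gamma_1)\rho^2$. Substituting $\rho_1^2=|\mathrm{Re}(\lambda_1)|/\mathrm{Re}(\gamma_1)=-\mathrm{Re}(\lambda_1)/\mathrm{Re}(\gamma_1)$ gives $\partial_\rho a(\rho_1)=-2\mathrm{Re}(\lambda_1)>0$. Since $\rho_1$ is assumed non-spurious, Theorem \ref{thrm:birth_isola} directly yields an isola perturbing from $(b(\rho_0),\rho_0)$ with $\rho_0=\rho_1$.

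For parts (ii)--(iii), I would specialise Theorem \ref{thrm:gen_FR} to the cubic truncation. Because $f_1,f_2,g_1,g_2$ collapse to the constants $\mathrm{Re}(c_{1,\mathbf{0}})$ and $\mathrm{Im}(c_{1,\mathbf{0}})$, the discriminant inside $K^\pm(\rho;\Omega)$ simplifies to
\begin{equation}
\Delta(\rho)=\varepsilon^{2}\norm{c_{1,\mathbf{0}}}^{2}-a(\rho)^{2},
\end{equation}
which is independent of $\Omega$. The projection of the zero set of $G(\rho;\Omega)$ onto the $\rho$-axis is therefore exactly $\{\rho\ge 0:\Delta(\rho)\ge 0\}=\{\rho\ge 0:|a(\rho)|\le \varepsilon\norm{c_{1,\mathbf{0}}}\}$, while the $\Omega$-equation $G=0$ solves for $\Omega$ as a single-valued function on this set. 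Thus the topology of the FRC in the $(\Omega,\rho)$-plane is controlled entirely by the super-level set of $-|a(\rho)|$ at the threshold $-\varepsilon\norm{c_{1,\mathbf{0}}}$.

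Next I would analyse the shape of $|a(\rho)|$ on $[0,\rho_1]$. Since $\mathrm{Re}(\lambda_1)<0<\mathrm{Re}(\gamma_1)$, the function $a(\rho)$ is negative on $(0,\rho_1)$ with an interior minimum at the critical point $\rho^{\ast}=\sqrt{|\mathrm{Re}(\lambda_1)|/(3\mathrm{Re}(\gamma_1))}=\rho_1/\sqrt{3}$. A direct computation gives
\begin{equation}
|a(\rho^{\ast})|=\tfrac{2}{3}|\mathrm{Re}(\lambda_1)|\rho^{\ast}=\sqrt{\tfrac{4|\mathrm{Re}(\lambda_1)|^{3}}{27\,\mathrm{Re}(\gamma_1)}}.
\end{equation}
Therefore the set $\{|a(\rho)|\le\varepsilon\norm{c_{1,\mathbf{0}}}\}$ consists of two disjoint intervals, one near $\rho=0$ (giving the low-amplitude main branch) and one near $\rho=\rho_1$ (giving the isola), precisely as long as $\varepsilon\norm{c_{1,\mathbf{0}}}<|a(\rho^{\ast})|$. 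This yields the disconnection bound in (ii), and the merger value $\varepsilon_{\textup{m}}$ in (iii) is obtained by setting $\varepsilon\norm{c_{1,\mathbf{0}}}=|a(\rho^{\ast})|$, at which instant the saddle point of $\Delta$ at $\rho^{\ast}$ enters the admissible set and the two components join.

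\textbf{Expected main obstacle.} The algebraic steps are short; the conceptual care lies in justifying that ``connectedness of the FRC in $(\Omega,\rho)$'' is indeed equivalent to ``connectedness of the admissible $\rho$-set'' cut out by $\Delta(\rho)\ge 0$. I would therefore have to check that on each admissible interval the map $\rho\mapsto\Omega(\rho)$ obtained from $G(\rho;\Omega)=0$ is continuous and that the two $K^{\pm}$-branches glue into a single closed loop at the discriminant boundary $\Delta=0$ (as already illustrated in Fig.~\ref{fig:FRC_intersection_impl}). Once this topological correspondence is established, parts (ii) and (iii) follow from the elementary extremal calculation above, while (i) is an immediate consequence of Theorem \ref{thrm:birth_isola}.
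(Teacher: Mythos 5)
Your proposal is correct and follows essentially the same route as the paper's proof in Appendix F: reduce $G(\rho;\Omega)$ to $(b(\rho)-\Omega)\rho\pm\sqrt{\varepsilon^2\norm{c_{1,\mtrx{0}}}^2-a(\rho)^2}$, locate the fold points where the discriminant vanishes, and obtain $\varepsilon_\textup{m}$ from the depth $|a(\tilde\rho)|$ of the local minimum of $a$ at $\tilde\rho=\rho_1/\sqrt{3}$. Your phrasing in terms of the sub-level set $\{|a(\rho)|\le\varepsilon\norm{c_{1,\mtrx{0}}}\}$ splitting into two intervals is just a restatement of the paper's count of three intersections of $a(\rho)$ with the lines $\pm\varepsilon\norm{c_{1,\mtrx{0}}}$, and your explicit flag about translating connectedness of the admissible $\rho$-set into connectedness of the FRC is a point the paper itself treats only implicitly.
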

\begin{proof}
We derive this result in Appendix \ref{app:isola}.\qed
\end{proof}
\section{Numerical Examples \label{sec:num_examples}}
\subsection{The modified Shaw\textendash Pierre example}
As a typical benchmark, we first consider a modified version of the example of Shaw and Pierre \cite{Shaw1994}, in which an additional cubic nonlinear damper is added, as in the single degree-of-freedom example of Habib et al.\ \cite{Habib2017}. The equations of motion of our two-degree-of-freedom system in first-order form are given by
\begin{equation}
\dot{\mtrx{x}} =
\underbrace{
\begin{bmatrix}[c]
0 & 0 & 1 & 0 \vspace{1mm}\\
0 & 0 & 0 & 1 \vspace{1mm} \\
-\dfrac{2k}{m} & \dfrac{k}{m} & -\dfrac{c_1+c_2}{m} & \dfrac{c_2}{m} \vspace{1mm} \\
\dfrac{k}{m} & -\dfrac{2k}{m} & \dfrac{c_2}{m} & -\dfrac{c_1+c_2}{m}  \vspace{1mm}
\end{bmatrix}}_{\mtrx{A}} \mtrx{x} +
\underbrace{
\begin{bmatrix}[c]
0   \vspace{1mm}\\ 
0   \vspace{1mm}\\
-\dfrac{\kappa}{m}x_1^3-\dfrac{\alpha}{m}x_3^3 \vspace{1mm}\\ 
0  \vspace{1mm}
\end{bmatrix}}_{\mtrx{G}_\text{p}(\mtrx{x})}+
\varepsilon 
\underbrace{
\begin{bmatrix}
0  \vspace{1mm}\\
0  \vspace{1mm}\\
\dfrac{P}{m}\cos(\Omega t)  \vspace{1mm}\\
0  \vspace{1mm}
\end{bmatrix}}_{\mtrx{F}_\text{p}(\Omega t)}, \label{eq:SP_eom}
\end{equation}
where $\mtrx{x}=[x_1,x_2,x_3,x_4]^\top=[y_1,y_2,\dot{y}_1,\dot{y}_2]^\top$. The matrix $\mtrx{A}$ has the eigenvalue pairs
\begin{align}
& \lambda_{1,2}=\left(-\zeta_1 \pm \mi \sqrt{1-\zeta_1^2}\right)\omega_1,\quad \zeta_1 = \frac{c_1}{2m\omega_1},\quad \omega_1 = \sqrt{\frac{k}{m}}, \\ 
& \lambda_{3,4}=\left(-\zeta_2 \pm \mi \sqrt{1-\zeta_2^2}\right)\omega_2,\quad \zeta_2 = \frac{c_1+2c_2}{2m\omega_2},\quad \omega_2 = \sqrt{\frac{3k}{m}},
\end{align}
assume that both modes are underdamped, i.e., $0<\zeta_1<1$ and $0<\zeta_2<1$. The matrix $\mtrx{T}$ that transforms our system to complex modal coordinates is composed of the eigenvectors of our system, i.e.,
\begin{equation}
\mtrx T = 
\begin{bmatrix}[r]
1 & 1 & 1 & 1    \\
1 & 1 & -1 & -1 \\
\lambda_1 & \bar{\lambda}_1 & \lambda_3 & \bar{\lambda}_3 \\
\lambda_1 & \bar{\lambda}_1 & -\lambda_3 & -\bar{\lambda}_3
\end{bmatrix},
\end{equation}
with the inverse of $\mtrx{T}$ given by
\begin{equation}
\mtrx{T}^{-1} = 
\begin{bmatrix}[r]
-\dfrac{\bar{\lambda}_1}{2(\lambda_1-\bar{\lambda}_1)} & 
-\dfrac{\bar{\lambda}_1}{2(\lambda_1-\bar{\lambda}_1)} &
\dfrac{1}{2(\lambda_1-\bar{\lambda}_1)} &
\dfrac{1}{2(\lambda_1-\bar{\lambda}_1)} \vspace{1mm} \\

\dfrac{\lambda_1}{2(\lambda_1-\bar{\lambda}_1)} & 
\dfrac{\lambda_1}{2(\lambda_1-\bar{\lambda}_1)} &
-\dfrac{1}{2(\lambda_1-\bar{\lambda}_1)} &
-\dfrac{1}{2(\lambda_1-\bar{\lambda}_1)} \vspace{1mm} \\

-\dfrac{\bar{\lambda}_3}{2(\lambda_3-\bar{\lambda}_3)} & 
\dfrac{\bar{\lambda}_3}{2(\lambda_3-\bar{\lambda}_3)} &
\dfrac{1}{2(\lambda_3-\bar{\lambda}_3)} &
-\dfrac{1}{2(\lambda_3-\bar{\lambda}_3)} \vspace{1mm} \\

\dfrac{\lambda_3}{2(\lambda_3-\bar{\lambda}_3)} & 
-\dfrac{\lambda_3}{2(\lambda_3-\bar{\lambda}_3)} &
-\dfrac{1}{2(\lambda_3-\bar{\lambda}_3)} &
\dfrac{1}{2(\lambda_3-\bar{\lambda}_3)} 
\end{bmatrix}.
\end{equation}

In this example, we can compute the cubic coefficient of $a(\rho)$ explicitly.  Specifically, we have

\begin{align}
\text{Re}(\gamma_1) &= \text{Re}\left(-\frac{3\alpha}{m}[\mtrx{T}^{-1}]_{1,3}[\mtrx{T}]_{3,1}^2 [\mtrx{T}]_{3,2} 
-\frac{3\kappa}{m}[\mtrx{T}^{-1}]_{1,3}[\mtrx{T}]_{1,1}^2 [\mtrx{T}]_{1,2}\right) \\
& =\text{Re}\left(- \frac{3\left(\alpha\lambda_1^2\bar{\lambda}_1+\kappa\right)}{2m(\lambda_1-\bar{\lambda}_1)}\right) \\
& =-\frac{3\alpha k}{4m^2}. \label{eq:cubic_SP}
\end{align}
Therefore, for $\alpha<0$, the reduced dynamics on the third-order autonomous SSM will have a nontrivial zero. If, additionally, this zero is non-spurious, then Theorem \ref{thrm:isola} guarantees the existence of an isola. Using Eq. (\ref{eq:isola_merge}), the isola will merge with the main FRC for
\begin{equation}
\varepsilon_\text{m} = \frac{8m\sqrt{1-\zeta_1^2}\omega_1}{|P|}\sqrt{\frac{16m^2(\zeta_1\omega_1)^3}{81 k |\alpha|}}. \label{eq:merge_SP}
\end{equation}
We verify this analytic prediction numerically in Example \ref{ex:example_SP_1} below.
\begin{example}\label{ex:example_SP_1}
We choose the parameter values listed in Table \ref{tab:system_par_SP_ex_1} and compute the forced response curve for system (\ref{eq:SP_eom}). Note that for this choice of damping parameters, the non-resonance conditions (\ref{eq:ext_res}) are satisfied.
\begin{table}[H]
\begin{centering}
\caption{Parameter values for Example \ref{ex:example_SP_1}. \label{tab:system_par_SP_ex_1}}
\begin{tabular}{|c|c|}
\hline 
Symbol & Value \tabularnewline
\hline 
\hline 
$m$ & $\unit[1]{kg}$\tabularnewline
\hline 
$c_1$ & $\unit[0.03]{N\cdot s/m}$ \tabularnewline
\hline
$c_2$ & $\unit[\sqrt{3}\cdot0.03]{N\cdot s/m}$ \tabularnewline
\hline  
$k$ & $\unit[3]{N/m}$ \tabularnewline
\hline 
$\kappa$ & $\unit[0.4]{N/m^3}$ \tabularnewline
\hline 
$\alpha$ & $\unit[-0.6]{N \cdot (s/m)^3}$  \tabularnewline
\hline 
$P$ & $\unit[3]{N}$ \tabularnewline
\hline
\end{tabular}
\par\end{centering}
\end{table}
Plugging in the parameter values of Table \ref{tab:system_par_SP_ex_1} into Eq. (\ref{eq:cubic_SP}), we observe that the third-order coefficient of the autonomous part of the SSM is
\begin{equation}
\text{Re}(\gamma_1)=-\frac{3\alpha k}{4m^2} = 1.35 > 0. \label{eq:ex1_zero}
\end{equation} 
We now numerically verify that this transverse zero is non-spurious by computing the (complex) roots of $a(\rho)$ for an increasing order $M$ of expansion in formula (\ref{eq:auto_a}) using \textsc{SSMtool}. In Fig. \ref{fig:FRC_SP_roots}, we show these roots, up to $50^\text{th}$ order, with lighter colors indicating higher orders of approximation. Eq. (\ref{eq:ex1_zero}) and Fig. \ref{fig:FRC_SP_roots} allow us to conclude from statement (i) of Theorem \ref{thrm:isola} the existence of an isola near the amplitude value $|\rho_1^\pm|$, where $\rho_1^\pm$ are the two nontrivial, non-spurious zeros of $a(\rho)$ seen in Fig. \ref{fig:FRC_SP_roots}.
\begin{figure}
\centering
    \begin{subfigure}{1\textwidth}
        \centering
        \if\mycmd1
        \includegraphics[scale=0.45]{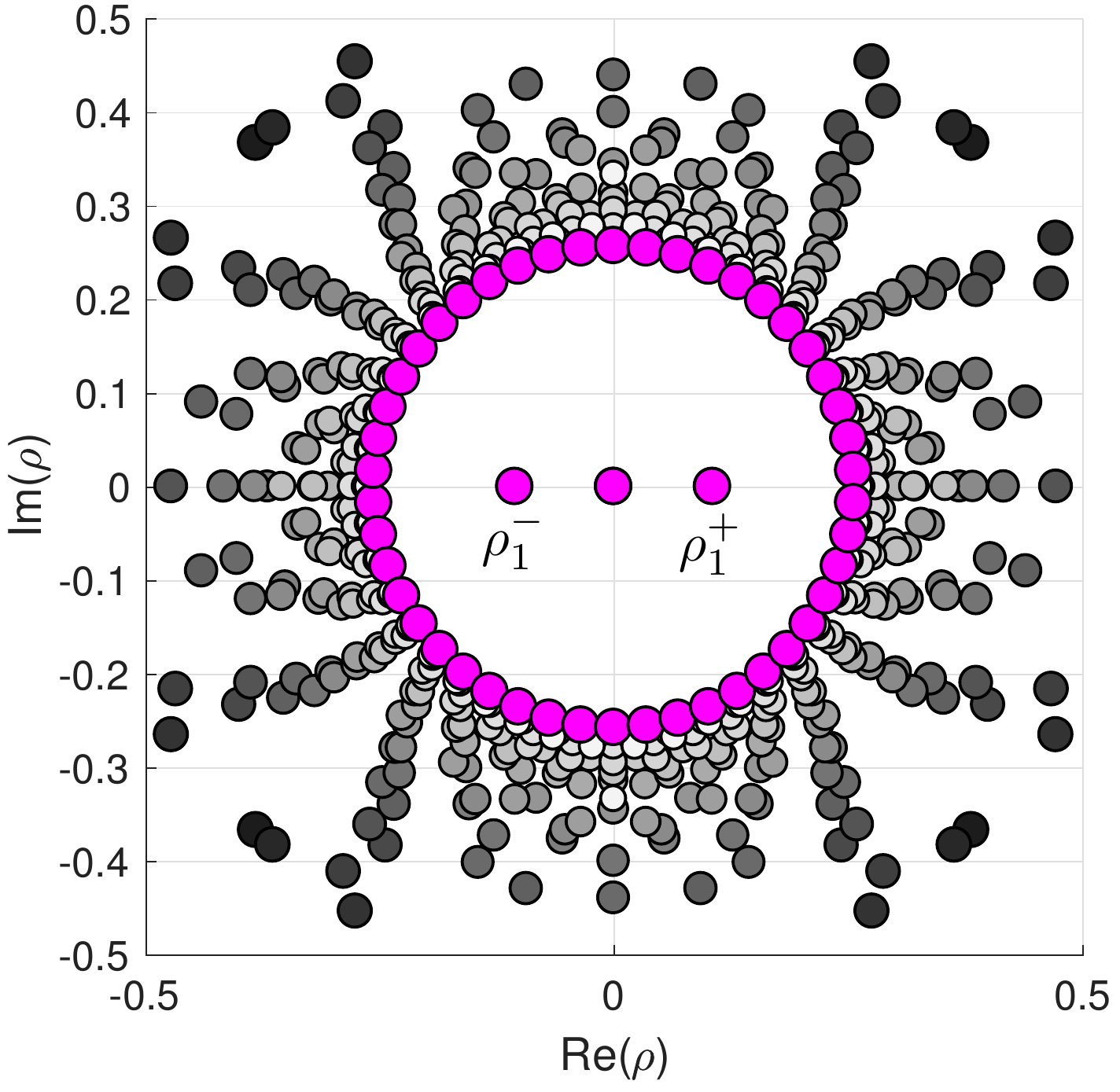}
        \fi
    \end{subfigure}
    \caption{Plot of the roots of $a(\rho)$ in the complex plane for Example \ref{ex:example_SP_1}, with brighter colors indicating an increasing order $M$ in the expansion of $a(\rho)$, up to 50th order (roots that are negative of each other are to be identified). The zeros from the highest approximation are highlighted in magenta. We observe that a non-trivial transverse zero $\rho_1^\pm$ persists for higher-order approximations and is clearly within the domain of convergence of the function $a(\rho)$.  \label{fig:FRC_SP_roots}}
\end{figure}
By Eq. (\ref{eq:merge_SP}), the isola will merge with the main branch of the forced response curve approximately at the parameter value
\begin{equation}
\varepsilon_\text{m}= 0.0028.
\end{equation} 
We now verify this analytic prediction for the isola merger numerically. In Fig. \ref{fig:FRC_SP}, we show in red the leading-order forced response curves for $\varepsilon=0.0027$ and $\varepsilon=0.0029$. Also shown in black are the forced response curves of the full system obtained via the periodic-orbit toolbox of $\textsc{coco}$ \cite{Dankowicz2013}. We conclude that the FRC obtained from our two-dimensional, SSM-reduced system perfectly predicts the behavior of the full system. 
\begin{figure}[h!]
\centering
    \begin{subfigure}{0.45\textwidth}
        \centering
        \if\mycmd1
        \includegraphics[scale=0.4]{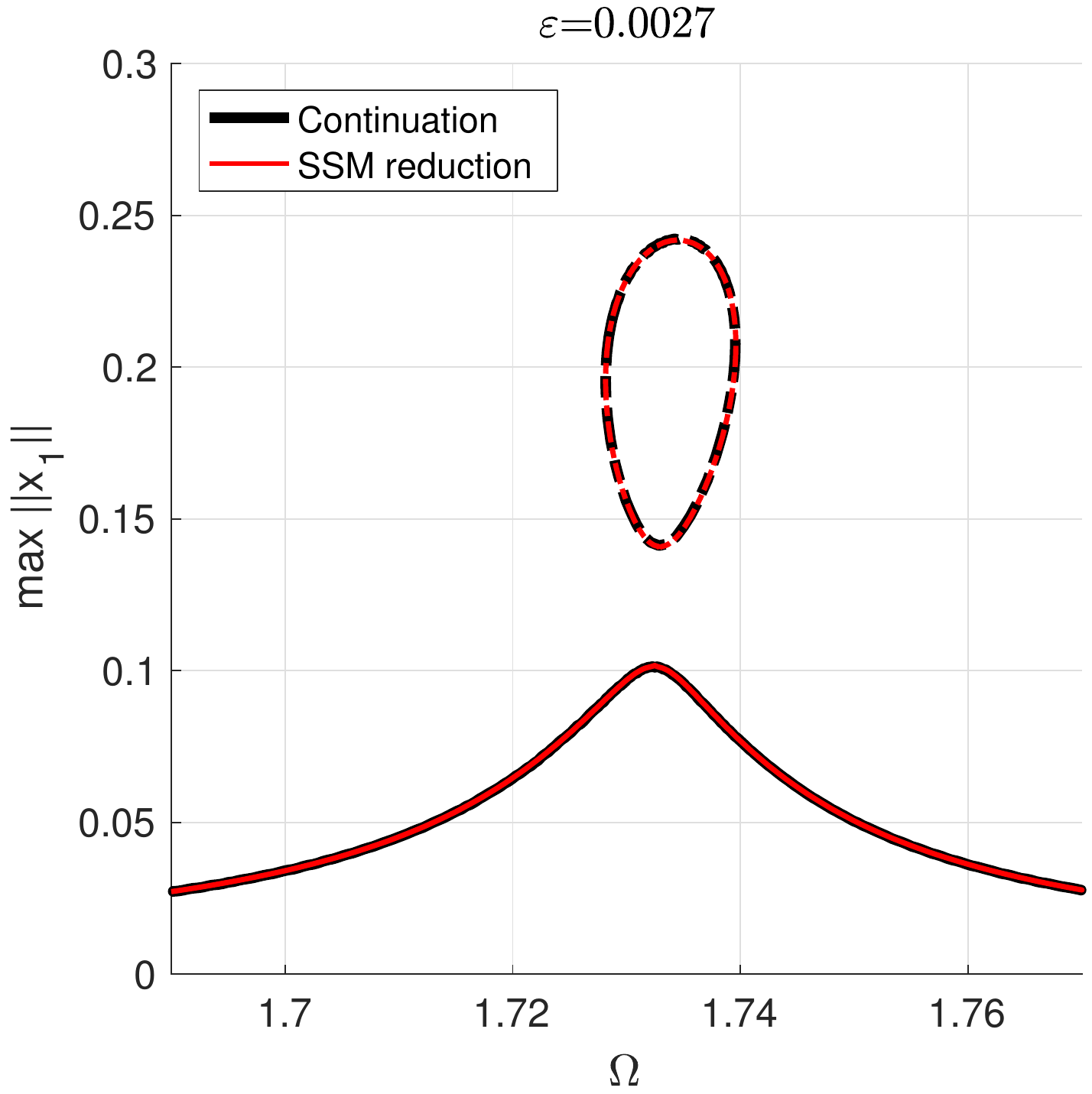}
        \fi
        \hspace*{8mm}
        \caption{\label{fig:FRC_SP_L}}
    \end{subfigure}
    \hspace*{5mm}
    \begin{subfigure}{0.45\textwidth}
        \centering
        \if\mycmd1
        \includegraphics[scale=0.4]{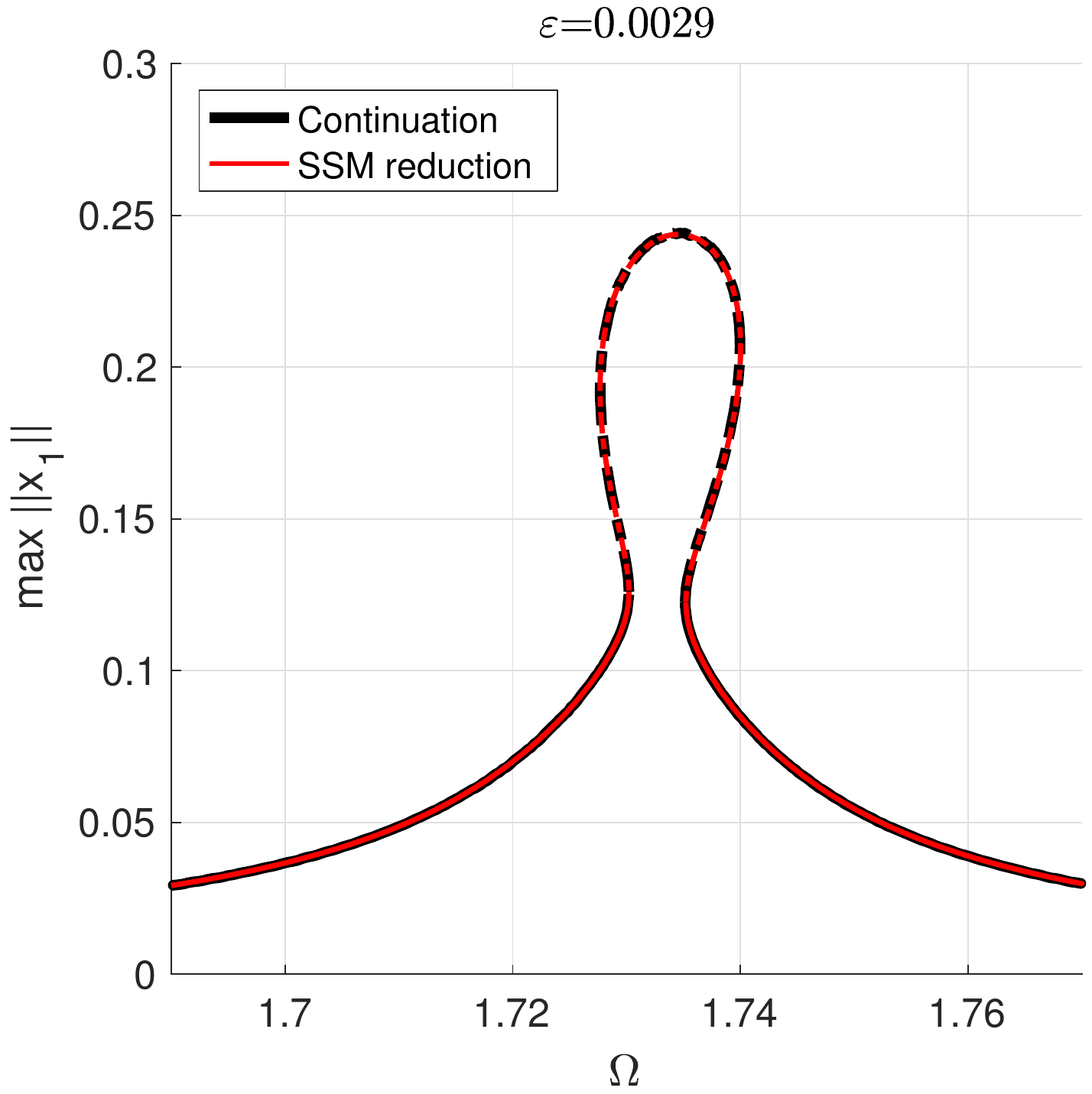}
        \fi
        \hspace*{8mm}
        \caption{\label{fig:FRC_SP_U}}
    \end{subfigure}
    \caption{(a) Forced response curve for $\varepsilon=0.0027$ in Example \ref{ex:example_SP_1}, which is slightly below the predicted value $\varepsilon_\text{m}$ for the merger of the isola with the main branch of the FRC. The dashed lines indicate that the isola is unstable. (b) Forced response curve for $\varepsilon=0.0029$, which is slightly above the predicted merging value $\varepsilon_m$. The unstable isola has indeed merged with the main FRC branch, as predicted analytically. \label{fig:FRC_SP}}
\end{figure}
\end{example}
\begin{example}\label{ex:example_SP_2}
In this example, we add a quintic nonlinear damper to system (\ref{eq:SP_eom}), which yields the modified equations of motion in first-order form 
\begin{equation}
\dot{\mtrx{x}} =
\underbrace{
\begin{bmatrix}[c]
0 & 0 & 1 & 0 \vspace{1mm}\\
0 & 0 & 0 & 1 \vspace{1mm} \\
-\dfrac{2k}{m} & \dfrac{k}{m} & -\dfrac{c_1+c_2}{m} & \dfrac{c_2}{m} \vspace{1mm} \\
\dfrac{k}{m} & -\dfrac{2k}{m} & \dfrac{c_2}{m} & -\dfrac{c_1+c_2}{m}  \vspace{1mm}
\end{bmatrix}}_{\mtrx{A}} \mtrx{x} +
\underbrace{
\begin{bmatrix}[c]
0   \vspace{1mm}\\ 
0   \vspace{1mm}\\
-\dfrac{\kappa}{m}x_1^3-\dfrac{\alpha}{m}x_3^3 -\dfrac{\beta}{m}x_3^5 \vspace{1mm}\\ 
0  \vspace{1mm}
\end{bmatrix}}_{\mtrx{G}_\text{p}(\mtrx{x})}+
\varepsilon 
\underbrace{
\begin{bmatrix}
0  \vspace{1mm}\\
0  \vspace{1mm}\\
\dfrac{P}{m}\cos(\Omega t)  \vspace{1mm}\\
0  \vspace{1mm}
\end{bmatrix}}_{\mtrx{F}_\text{p}(\Omega t)}.
\end{equation}
We again use the parameter values in Table \ref{tab:system_par_SP_ex_1} and additionally select the quintic damping coefficient $\beta=\unit[1.2]{N \cdot (s/m)^5}$. We use \textsc{SSMtool} to calculate the functions included in the reduced dynamics (\ref{eq:red1_orig})-(\ref{eq:red2_orig}) up to $5^\text{th}$ order in $\rho$. The function $a(\rho)$ now has two positive, non-spurious zeros located at $\rho^+_1=0.13$ and $\rho^+_2=0.17$. Therefore, Theorem \ref{thrm:birth_isola} implies the existence of two separate isolas bifurcating from the damped backbone curve under periodic forcing.
\begin{figure}
\centering
    \begin{subfigure}{1\textwidth}
        \centering
		\if\mycmd1     
        \includegraphics[scale=0.45]{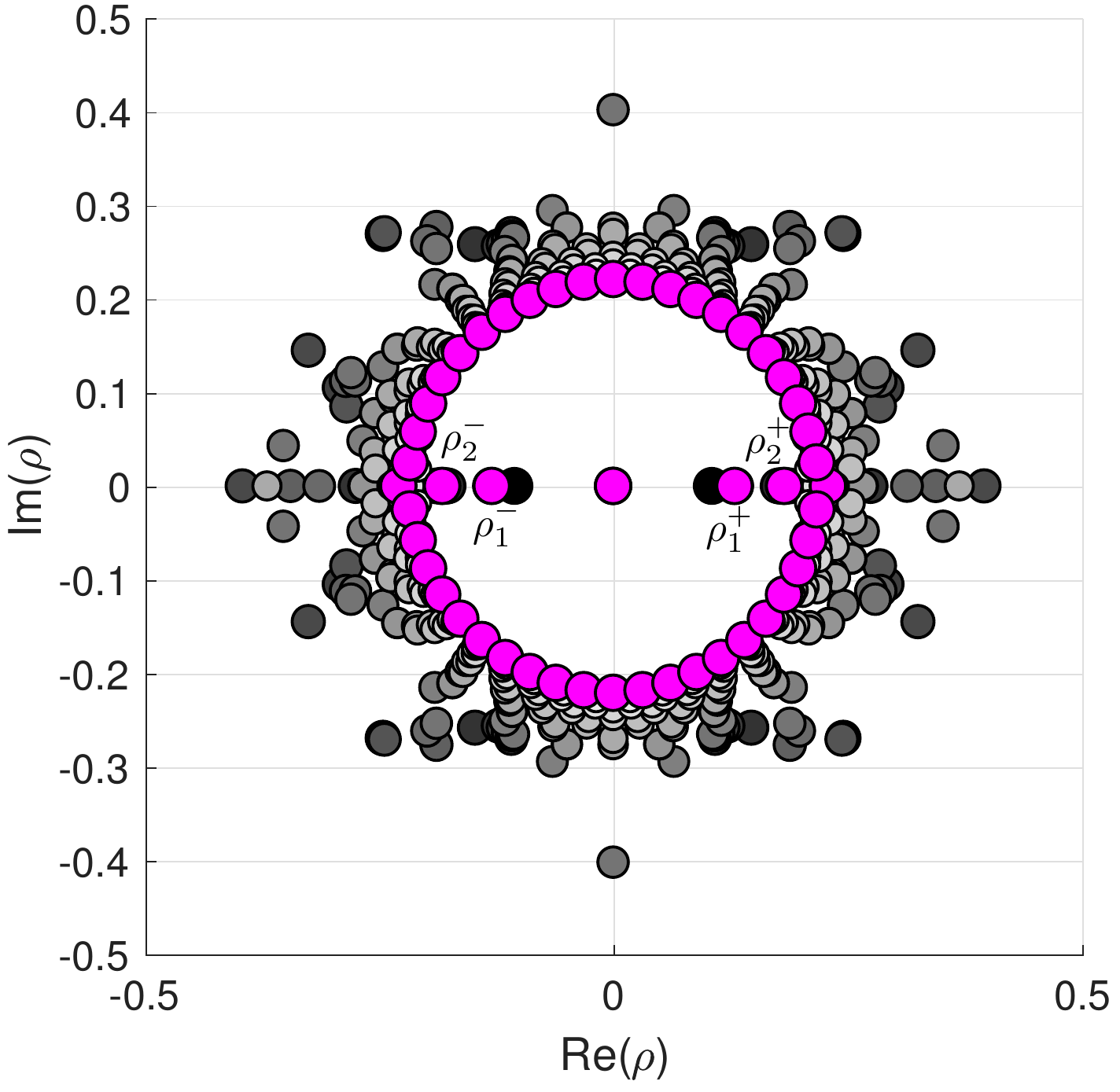}
        \fi
    \end{subfigure}
    \caption{Plot of the roots of $a(\rho)$ in the complex plane for Example \ref{ex:example_SP_2}, with brighter colors indicating an increasing order $M$ in the expansion of $a(\rho)$, up to 50th order (roots that are negative of each other are to be identified). The zeros from the highest approximation are highlighted in magenta. We observe that the non-trivial transverse zeros $\rho_1^\pm$ and $\rho_2^\pm$ persist for higher-order approximations and is clearly within the domain of convergence of the function $a(\rho)$.\label{fig:FRC_SP_O5_roots}}
\end{figure}

We show the extracted forced response curves for three different values of $\varepsilon$ in Fig. \ref{fig:FRC_SP_O5}. As we have predicted above, two isolas are born out of the non-trivial transverse zeros of $a(\rho)$ along the autonomous backbone curve. The isola with lower amplitudes is unstable, whereas the isola with higher amplitudes is partially stable. If we increase the forcing amplitude $\varepsilon$, the two isolas merge. Increasing $\varepsilon$ further will make the merged isolas merge with the lower FRC branch. The branches of the forced response curve, extracted using SSM theory, are again verified using the periodic-orbit toolbox of $\textsc{coco}$ \cite{Dankowicz2013}. In order to initialize the continuation algorithm, we integrate the full system to provide an initial solution guess that is used to start the continuation process. For higher amplitude values, our $5^\text{th}$-order approximation slightly deviates from the numerical continuation results, as expected. For lower amplitudes, however, our SSM-based prediction perfectly matches the numerical result.

\begin{figure}
\centering
    \begin{subfigure}{0.45\textwidth}
        \centering
        \if\mycmd1
        \includegraphics[scale=0.39]{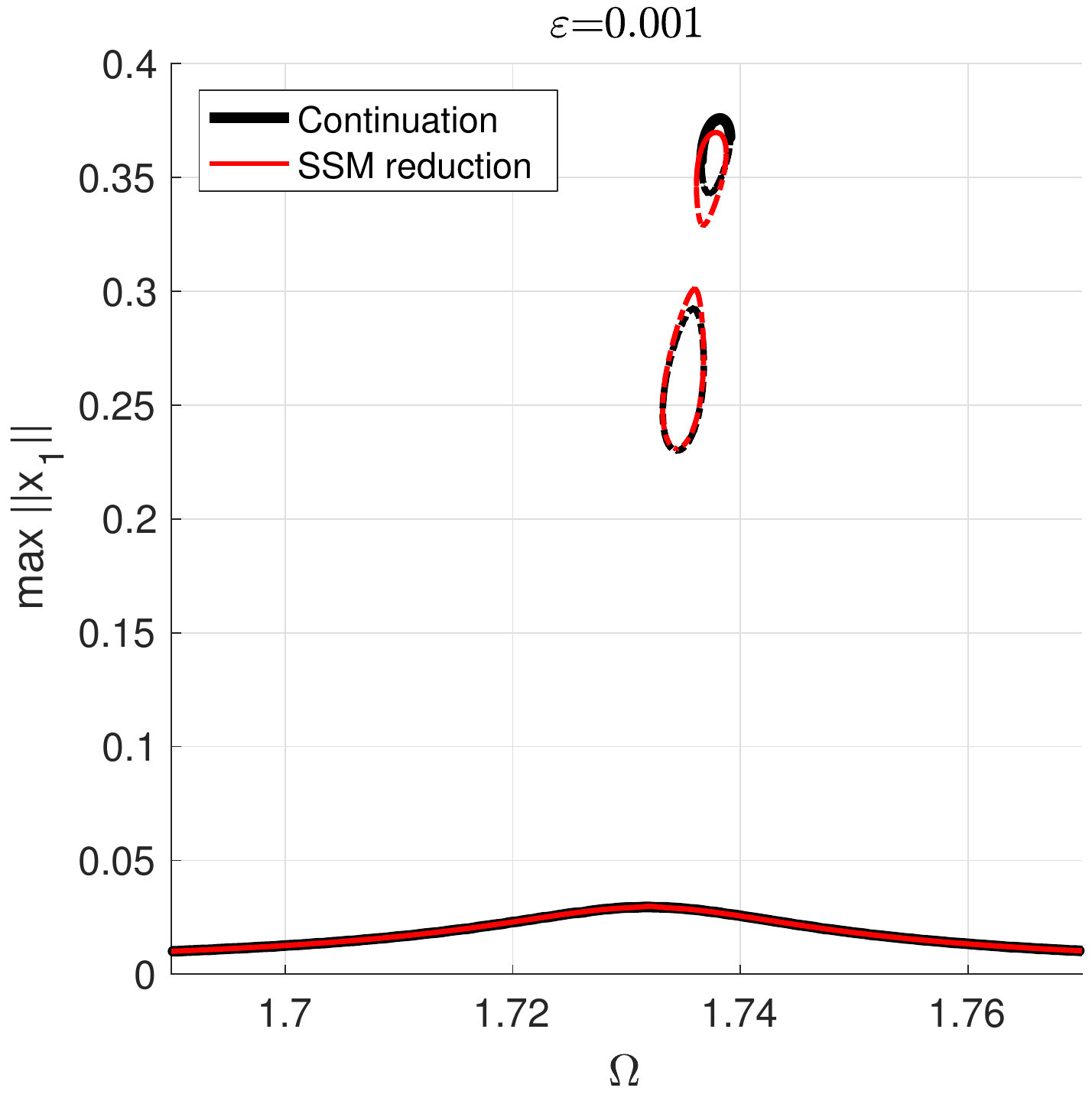}
        \fi
        \hspace*{8mm}
        \caption{\label{fig:FRC_SP_O5_1}}
        \vspace*{2mm}
    \end{subfigure}
    \hspace*{5mm}
    \begin{subfigure}{0.45\textwidth}
        \centering
        \if\mycmd1
        \includegraphics[scale=0.39]{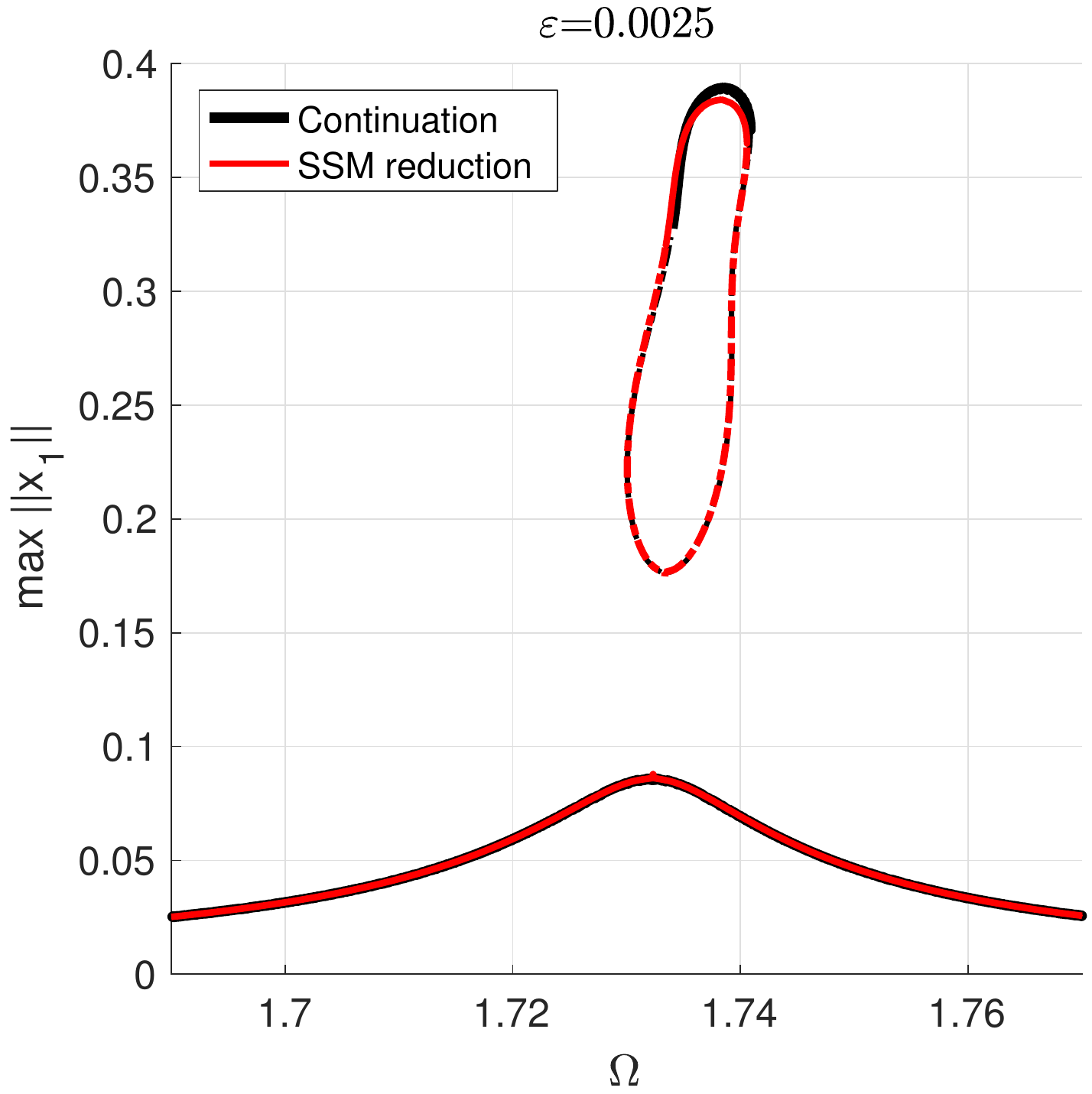}
        \fi
        \hspace*{8mm}
        \caption{\label{fig:FRC_SP_O5_2}}
        \vspace*{2mm}
    \end{subfigure}
    \begin{subfigure}{1\textwidth}
        \centering
        \if\mycmd1
        \includegraphics[scale=0.39]{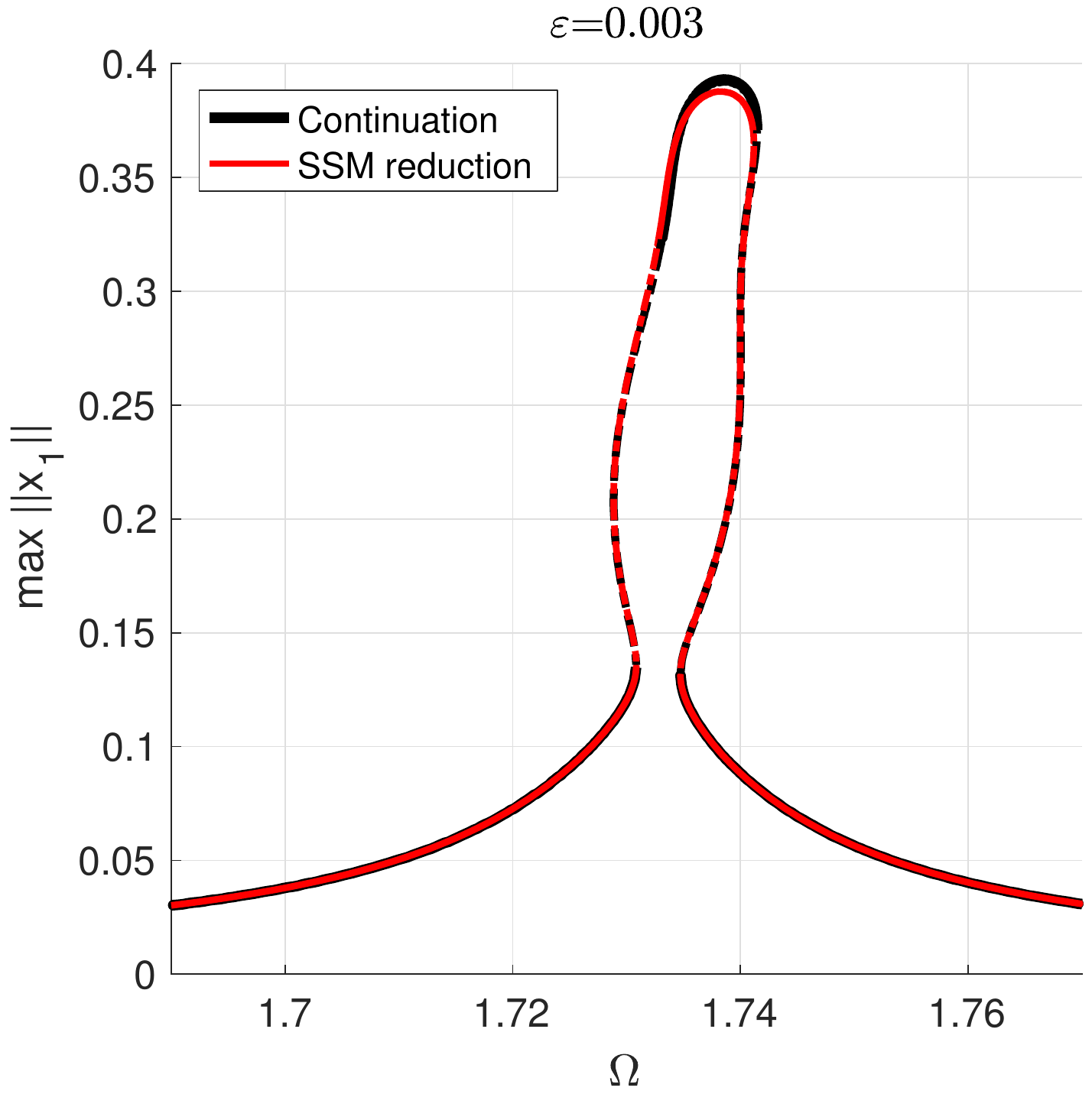}
        \fi
        \hspace*{8mm}
        \caption{\label{fig:FRC_SP_O5_3}}
        \vspace*{2mm}
    \end{subfigure}
    \caption{(a) Resulting forced response curve for $\varepsilon=0.001$ in Example \ref{ex:example_SP_2}. Two isolas are born out of the non-trivial transverse zeros of $a(\rho)$, located at ($b(\Omega),\rho$) on the autonomous backbone curve. The dashed lines indicate that the lower-amplitude isola is unstable in nature, whereas the higher-amplitude isola is partially stable. (b) Forced response curve for $\varepsilon=0.0025$. Both isolas have merged into one bigger isolated region. The lower half of the merged isolas is unstable in nature, whereas the upper half is stable. (c) Forced response curve for $\varepsilon=0.003$. The two merged isolas now have merged with the lower branch of the FRC. \label{fig:FRC_SP_O5}}
\end{figure}
\end{example}

\subsection{A discretized, forced Bernoulli beam with a cubic spring \label{sec:beam}}
Inspired by a similar example of Peeters et al. \cite{Peeters2009}, we now construct a reduced-order model for a discretized, cantilevered Bernoulli beam with a cubic spring and damper attached to the free end of the beam. We obtain the reduced model by computing the dynamics on the slowest, two-dimensional, time-periodic SSM of the system. This example is complex enough so that numerical continuation for obtaining the forced response is no longer feasible numerically, whereas our SSM-based predictions can still be carried out and will be seen to be accurate. 

We consider a square beam of length $L$, with cross-section $A$, situated in a Cartesian coordinate system of $(x,y,z)$ and basis $(\mtrx{e}_x,\mtrx{e}_y,\mtrx{e}_z)$. We list the relevant beam parameters in Table \ref{tab:system_par_beam}. 
\begin{table}
\begin{centering}
\caption{Notation used in the discretized beam example. \label{tab:system_par_beam}}
\begin{tabular}{|c|c|}
\hline 
Symbol & Meaning {(}unit{)}\tabularnewline
\hline 
\hline 
$L$ & Length of beam $(\unit[]{mm})$\tabularnewline
\hline 
$h$ & Height of beam $(\unit[]{mm})$\tabularnewline
\hline 
$b$ & Width of beam $(\unit[]{mm})$\tabularnewline
\hline 
$\rho$ & Density $(\unit[]{kg/mm^3})$\tabularnewline
\hline 
$E$ & Young's Modulus $(\unit[]{kPa})$\tabularnewline
\hline 
$I$ & Area moment of inertia $(\unit[]{mm^4})$\tabularnewline
\hline 
$\kappa$ & Coefficient cubic spring $(\unit[]{N/mm^3})$ \tabularnewline
\hline 
$\gamma$ & Coefficient cubic damper $(\unit[]{N \cdot s/mm^3})$ \tabularnewline
\hline 
$A$ & Cross-section of beam $(\unit[]{mm^2})$\tabularnewline
\hline 
$P$ & External forcing amplitude $(\unit[]{N})$\tabularnewline
\hline
\end{tabular}
\par\end{centering}
\end{table}
The line of points coinciding with the $x$-axis is called the beam's neutral axis. The Bernoulli hypothesis states that initially straight material lines, normal to the neutral axis, remain (a) straight and (b) inextensible, and (c) rotate as rigid lines to remain perpendicular to the beam's neutral axis after deformation. These kinematic assumptions are satisfied the displacement field,
\begin{align}
u_x(x,y,z,t) & = -z\frac{\partial w(x,t)}{\partial x}, \\
u_y(x,y,z,t) & = 0, \\
u_z(x,y,z,t) & = w(x,t),
\end{align}
where $(u_x,u_y,u_z)$ are the components of the displacement field $\mtrx{u}(x,y,z,t)$ of a material point located at $(x,y,z)$. The transverse displacement of a material point with initial coordinates on the beam's neutral axis at $z=0$ is denoted by $w(x)$. The rotation angle of a transverse normal line about the $y$-axis is given by $-\partial_x w(x)$. 
Using the Green-Lagrange strain tensor, we can express the relevant strains as
\begin{align}
\varepsilon_{xx} = -z \frac{\partial^2 w(x,t)}{\partial x^2}, \quad
\gamma_{xz} = 2\varepsilon_{xz} = 0.
\end{align}
We assume an isotropic, linearly elastic constitutive relation between the stresses and strains, i.e.
\begin{equation}
\sigma_{xx} = E\varepsilon_{xx},
\end{equation}
which finally leads to the equation of motion of the beam
\begin{equation}
\rho A\frac{\partial^2 w(x,t)}{\partial{t^2}}-\rho I \frac{\partial^4 w(x,t)}{\partial x^2 \partial t^2} + EI \frac{\partial^4 w(x,t)}{\partial x^4} = 0. \label{eq:PDE_beam}
\end{equation}
We assume that the thickness of the beam is small compared to its length, i.e., $h \ll L$, and hence we can neglect the mixed partial derivative term in Eq. (\ref{eq:PDE_beam}) (cf. Reddy and Mahaffey \cite{Reddy2013}).

After discretization of (\ref{eq:PDE_beam}), we obtain the set of ordinary differential equations
\begin{equation}
\mtrx{M}\ddot{\mtrx{x}}+\mtrx{K}\mtrx{x}=\mtrx{0},
\end{equation} 
where $\mtrx{x}\in\mathbb{R}^{2m}$, and $m$ is the number of elements used in the discretization. Each node of the beam has two coordinates related to the transverse displacement $w(x)$ and the rotation angle $-\partial_x w(x)$ of the cross section. We assume structural damping by considering the damping matrix
\begin{equation}
\mtrx{C} = \alpha \mtrx{M} + \beta \mtrx{K},
\end{equation}
with parameters $\alpha$ and $\beta$. We apply cosinusoidal external forcing on the transverse displacement coordinate at the free end of the beam with forcing frequency $\Omega$ and forcing amplitude $\varepsilon P$. Additionally, we add a cubic spring and damper along this coordinate, with coefficients $\kappa$ and $\gamma$, respectively. As a result, the equations of motion of the beam can be written as
\begin{equation}
\mtrx{M}\ddot{\mtrx{x}}+\mtrx{C}\dot{\mtrx{x}}+\mtrx{K}\mtrx{x}+\mtrx{g}(\mtrx{x},\dot{\mtrx{x}})=\varepsilon\mtrx{f}(\Omega t).
\end{equation} 
We illustrate the kinematics, the forcing and the cubic spring and damper in Fig. \ref{fig:EB_beam}.
\begin{figure}[h!]
\centering
    \begin{subfigure}{1\textwidth}
        \centering
        \if\mycmd1
        \includegraphics[scale=1]{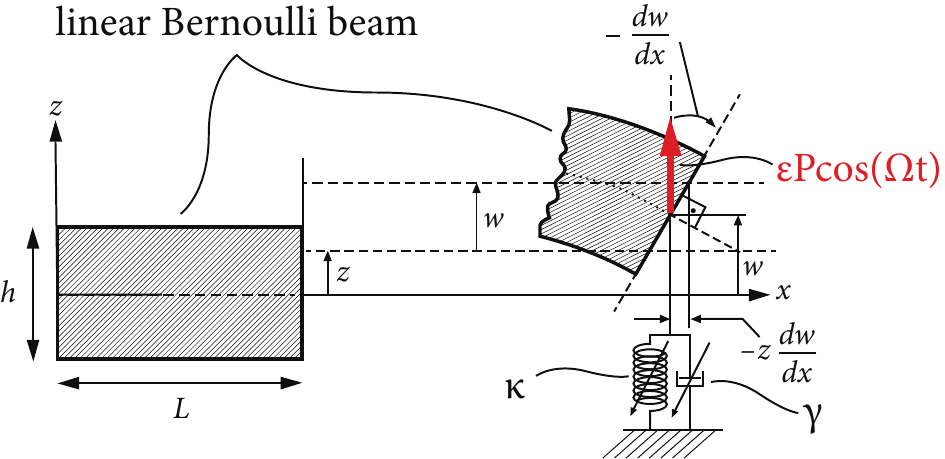}
        \fi
    \end{subfigure}
    \caption{Forced Bernoulli beam with a cubic spring and damper. \label{fig:EB_beam}}
\end{figure}

We select $m=25$ for the number of discretized elements, which gives $\mtrx{x}\in\mathbb{R}^{50}$, resulting in a 100-dimensional phase space. We list the geometric and material parameter values in Table \ref{tab:system_par_beam_ex_1}.

\begin{table}[H]
\begin{centering}
\caption{Geometric and material parameters for the Bernoulli beam. \label{tab:system_par_beam_ex_1}}
\begin{tabular}{|c|c|}
\hline 
Symbol & Value \tabularnewline
\hline 
\hline 
$L$ & $\unit[2700]{mm}$\tabularnewline
\hline 
$h$ & $\unit[10]{mm}$ \tabularnewline
\hline 
$b$ & $\unit[10]{mm}$ \tabularnewline
\hline 
$\rho$ & $\unit[1780\cdot 10^{-9}]{kg/mm^3}$ \tabularnewline
\hline 
$E$ & $\unit[45\cdot 10^6]{kPa}$ \tabularnewline
\hline 
$\kappa$ & $\unit[6]{N/mm^3}$ \tabularnewline
\hline 
$\gamma$ & $\unit[-0.02]{N \cdot s/mm^3}$  \tabularnewline
\hline 
$\alpha$ & $\unit[1.25\cdot 10^{-4}]{}$ \tabularnewline
\hline 
$\beta$ & $\unit[2.5 \cdot 10^{-4}]{}$ \tabularnewline
\hline
$P$ & $\unit[0.1]{N}$ \tabularnewline
\hline
\end{tabular}
\par\end{centering}
\end{table}
For these parameter values, the eigenvalues corresponding to the slowest eigenspace are 
\begin{equation}
\lambda_{1,2}=-0.0061884 \pm 7.0005 \mi.
\end{equation}
As earlier, introducing the scaling $\mtrx{s}\rightarrow\varepsilon^{\frac{1}{4}}\mtrx{s}$, we obtain the approximations
\begin{align}
a(\rho) &= -0.0061884\rho + 0.036202\rho^3,  \label{eq:beam_auto_a}\\
b(\rho) &=  7.0005 + 0.031689\rho^2, \\
c_{1,\mtrx{0}} &= 0.54645 + 0.00048\mi.
\end{align}
The function $a(\rho)$ in Eq. (\ref{eq:beam_auto_a}) has a non-trivial, transverse, positive zero at $\rho_1^+ = 0.413$. Fig. \ref{fig:FRC_beam_roots} shows this zero to be non-spurious. Therefore, by Theorem \ref{thrm:isola}, an isola will perturb from the point $(\Omega=b(\rho_1^+),\rho_1^+)$ of the autonomous backbone curve. Also by Theorem \ref{thrm:isola}, the isola will merge with the main branch of the FRC approximately for
\begin{equation}
\varepsilon_\text{m} =  \frac{1}{\norm{c_{1,\mtrx{0}}}}\sqrt{\frac{4|\text{Re}(\lambda_1)|^3}{27 \text{Re}(\gamma_1)}}=0.0018. \label{eq:beam_merge}
\end{equation}
\begin{figure}
\centering
    \begin{subfigure}{0.5\textwidth}
        \centering
     	\if\mycmd1        
        \includegraphics[scale=0.45]{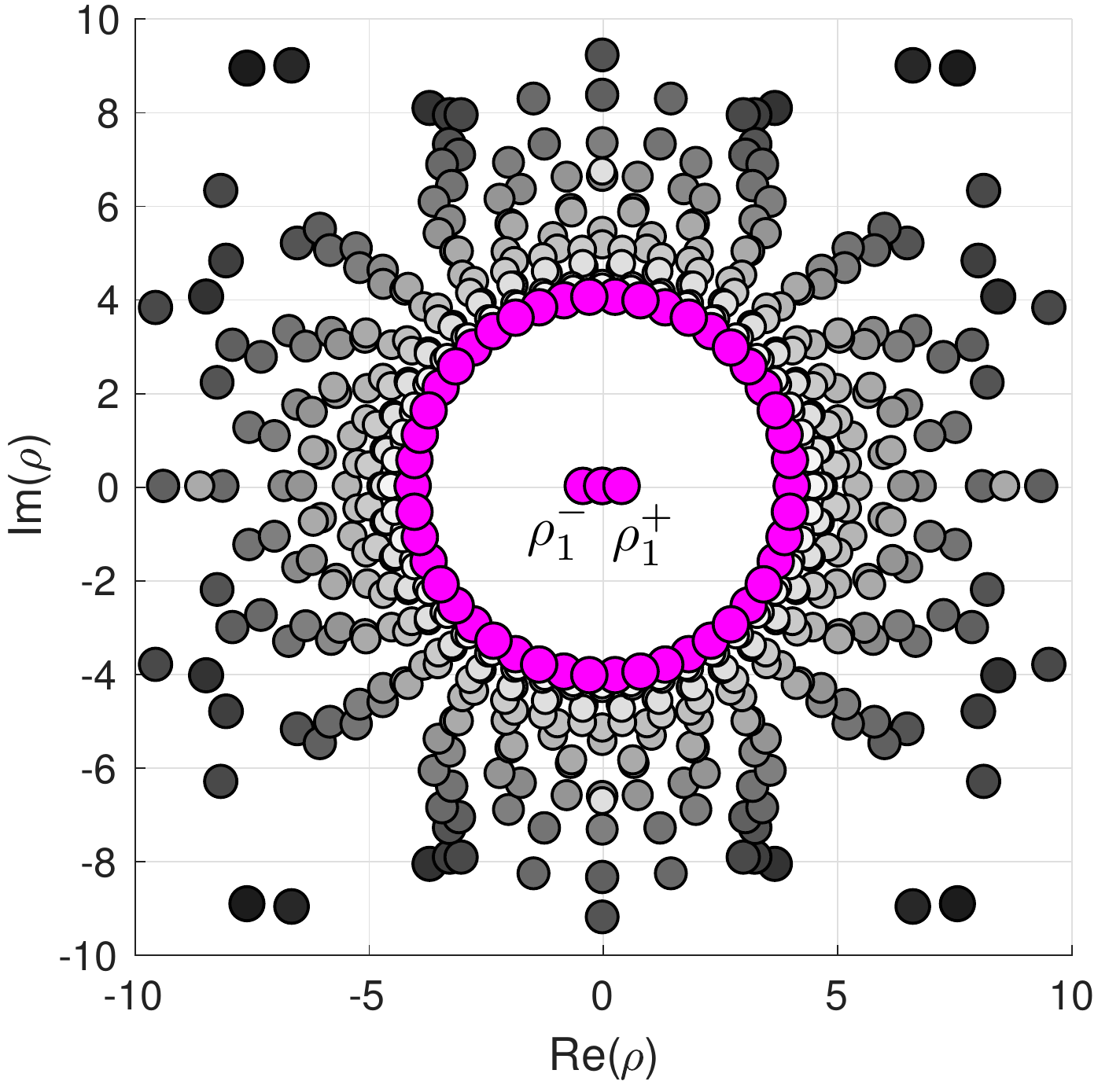}
        \fi
    \end{subfigure}
    \caption{Plot of the roots of $a(\rho)$ in the complex plane for Example \ref{sec:beam}, with brighter colors indicating an increasing order $M$ in the expansion of $a(\rho)$, up to 50th order (roots that are negative of each other are to be identified). The zeros from the highest approximation are highlighted in magenta. We observe that a non-trivial transverse zero $\rho_1^\pm$ persists for higher-order approximations and is clearly within the domain of convergence of the function $a(\rho)$.\label{fig:FRC_beam_roots}}
\end{figure}
To verify our predicted merger amplitude in (\ref{eq:beam_merge}), we perform a discrete numerical sweep of the full system, in which we force the system at different forcing frequencies and plot the resulting maximum absolute value of the transverse displacement of the tip of the beam, while keeping the forcing amplitude fixed (see Fig. \ref{fig:FRC_beam_ex_sweep}). 

While numerical continuation remains a powerful tool in verifying our analytic predictions in lower dimensions, its use becomes unfeasible in higher dimensions. For this reason, Fig. \ref{fig:FRC_beam_ex_sweep} only shows a discrete set of periodic responses computed from a point-wise, long-term numerical integration leading to a steady state, as opposed to a continuous FRC obtained from numerical continuation. 

\begin{figure}

\centering
        \begin{subfigure}{0.45\textwidth}
        \centering
        \if\mycmd1
        \includegraphics[scale=0.36]{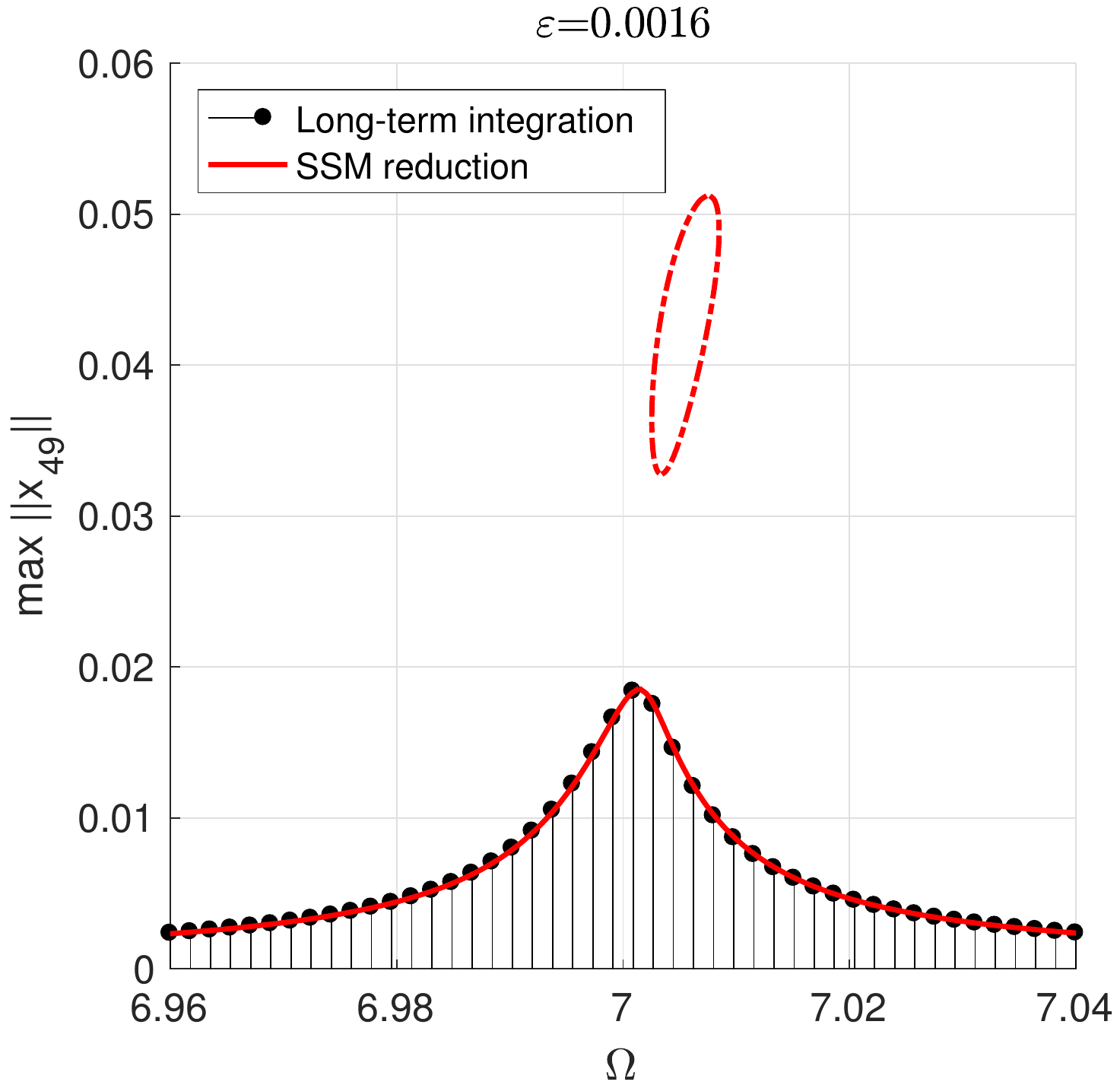}
        \fi
        \hspace{6mm}
        \caption{}
    \end{subfigure}
    \begin{subfigure}{0.45\textwidth}
        \centering
        \if\mycmd1
        \includegraphics[scale=0.36]{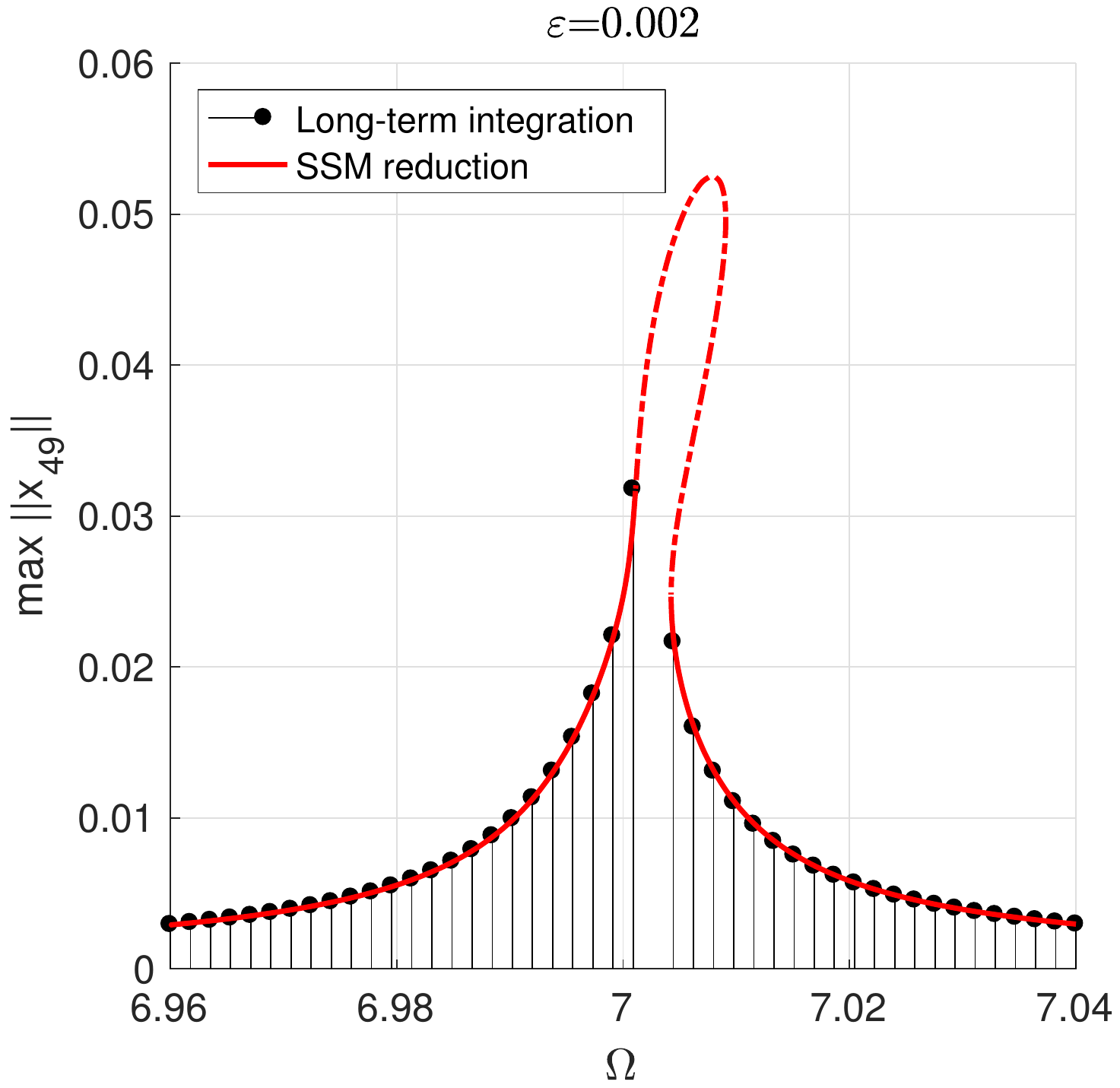}
        \fi
        \hspace{6mm}
        \caption{}
    \end{subfigure}
    \caption{(a) Extracted forced response curve for $\varepsilon=0.0016$ in Example \ref{sec:beam}. An unstable isola is born out of the non-trivial transverse zero of $a(\rho)$, located at $(\Omega=b(\rho_1^+),\rho_1^+)$ on the autonomous backbone curve. (b) Extracted forced response curve from the reduced dynamics for $\varepsilon=0.002>\varepsilon_\text{m}$. The main FRC branch has merged with the unstable isola. A discrete frequency sweep has been performed on the full 100-dimensional system to verify the accuracy of our two-dimensional reduced model. The frequency region in which the FRC becomes unstable, as predicted by the SSM-based reduced dynamics, is confirmed by the full numerical frequency sweep.  \label{fig:FRC_beam_ex_sweep}}
\end{figure}

\section{Conclusion}
We have used the exact reduced dynamics on two-dimensional time-periodic spectral submanifolds (SSMs) to extract forced-response curves (FRCs) and predict isolas in arbitrary multi-degree-of-freedom mechanical systems without performing costly numerical simulations.  We showed that for a cubic-order approximation, the reduced dynamics on the SSM gives an analytic prediction for the isolas, valid for any mode of a multi-degree-of-freedom oscillatory system. For simple examples, these predictions can explicitly be expressed as functions of the system parameters. Our lower-order predictions can be refined to higher-orders using the publicly available \textsc{MATLAB} script \textsc{SSMtool}\footnote{\textsc{SSMtool} is available at: \href{http://www.georgehaller.com}{www.georgehaller.com}}.

For mechanical systems of high degrees of freedom, numerical continuation techniques for forced response curves become computationally expensive. Instead, using the non-autonomous SSM and the corresponding reduced dynamics on the SSM, we are able to approximate all possible FRCs for different forcing amplitudes, as our expressions depend symbolically on the forcing amplitude $\varepsilon$. An additional advantage of the results derived here is that the isolas are uncovered by the transverse intersection of the zero-level sets of our two reduced equations. The isolas will generally be missed by numerical continuation techniques, which require starting on an isolated solution branch. As we have shown, our predictions for the main FRC branches, as well as for isolas, remain valid and computable in high dimensional problems in which numerical continuation is no longer a viable alternative for constructing these curves. Using the general results of Haller and Ponsioen \cite{Haller2016}, one can extend the periodic approach to detect quasi-periodic responses and isolas under quasi-periodic forcing.

\appendix
\section{Coefficient equations for the the non-autonomous SSMs \label{app:coeff_eq}}
As the SSM perturbs smoothly with respect to the parameter $\varepsilon$, we can expand $\mtrx{W}(\mtrx{s},\phi)$ and $\mtrx{R}(\mtrx{s},\phi)$ in $\varepsilon$, obtaining the expressions
\begin{align}
\mtrx{W}(\mtrx{s},\phi)&=\mtrx{W}_0(\mtrx{s})+\varepsilon\mtrx{W}_1(\mtrx{s},\phi)+\mathcal{O}(\varepsilon^2), \label{eq:SSM_exp}\\
\mtrx{R}(\mtrx{s},\phi)&=\mtrx{R}_0(\mtrx{s})+\varepsilon\mtrx{R}_1(\mtrx{s},\phi)+\mathcal{O}(\varepsilon^2).\label{eq:RED_exp}
\end{align}
The forcing terms in system (\ref{eq:ds_diag}) are of order $\varepsilon$, therefore, the leading-order terms in Eqs. (\ref{eq:SSM_exp})-(\ref{eq:RED_exp}) are not functions of $\phi$. We now substitute Eqs. (\ref{eq:SSM_exp})-(\ref{eq:RED_exp}) into the invariance Eq. (\ref{eq:invar}) and collect terms of equal order in $\varepsilon$. Given that $\mtrx{G}_\text{m}(\mtrx{q})=\mathcal{O}(\left|\mtrx{q}\right|^2)$, we can Taylor expand $\mtrx{G}_\text{m}(\mtrx{W}(\mtrx{s},\phi))$ around $\varepsilon=0$, to obtain
\begin{equation}
\mtrx{G}_\text{m}(\mtrx{W}(\mtrx{s},\phi)) = \mtrx{G}_\text{m}(\mtrx{W}_0(\mtrx{s}))+\varepsilon D_\mtrx{q}\mtrx{G}_\text{m}(\mtrx{W}_0(\mtrx{s}))\mtrx{W}_1(\mtrx{s},\phi)+\mathcal{O}(\varepsilon^2).
\end{equation}
Collecting terms of $\mathcal{O}(1)$, we obtain
\begin{equation}
\gmtrx{\Lambda} \mtrx{W}_0(\mtrx{s}) + \mtrx{G}_\text{m}(\mtrx{W}_0(\mtrx{s})) = D_\mtrx{s} \mtrx{W}_0(\mtrx{s})\mtrx{R}_0(\mtrx{s}),\label{eq:auto_SSM}
\end{equation}
and, subsequently, collecting terms of $\mathcal{O}(\varepsilon)$ we find that
\begin{align}
&\gmtrx{\Lambda} \mtrx{W}_1(\mtrx{s},\phi) + D_\mtrx{q}\mtrx{G}_\text{m}(\mtrx{W}_0(\mtrx{s}))\mtrx{W}_1(\mtrx{s},\phi) + \mtrx{F}_\text{m}(\phi) \label{eq:invar_O_eps}\\ & = D_\mtrx{s} \mtrx{W}_0(\mtrx{s})\mtrx{R}_1(\mtrx{s},\phi) 
+ D_\mtrx{s} \mtrx{W}_1(\mtrx{s},\phi)\mtrx{R}_0(\mtrx{s}) + D_{\phi} \mtrx{W}_1(\mtrx{s},\phi)\Omega. \nonumber
\end{align}
Eq. (\ref{eq:auto_SSM}) is derived and solved in Ponsioen et al. \cite{ponsioen2018automated} to compute autonomous SSMs using the Kronecker product. The construction of the autonomous SSMs has been automated and cast into the \textsc{MATLAB}-based computational tool \textsc{SSMtool}. In the following sections, the $\mathcal{O}(\varepsilon)$ terms of the SSM will simply be referred to as the non-autonomous part of the SSM. Note that in order to solve Eq. (\ref{eq:invar_O_eps}), Eq. (\ref{eq:auto_SSM}) needs to be solved first for the autonomous SSM and for the reduced dynamics, since the respective terms obtained from Eq. (\ref{eq:auto_SSM}) are needed in Eq. (\ref{eq:invar_O_eps}).

The autonomous and non-autonomous parts of the SSM and the reduced dynamics, which have previously been derived from an expansion in $\varepsilon$, are in turn Taylor expanded in the parameterization coordinates $\mtrx{s}$, which we explicitly express as 
\begin{align}
\mtrx{W}_0(\mtrx{s}) &=
\begin{bmatrix}
w_1^0(\mtrx{s}) \\
\vdots \\
w_{2n}^0(\mtrx{s}) 
\end{bmatrix},\quad w_i^0(\mtrx{s})=\sum_{\mtrx{m}}W_{i,\mtrx{m}}^0\mtrx{s}^\mtrx{m},  \\
\mtrx{R}_0(\mtrx{s}) &=
\begin{bmatrix}
r_1^0(\mtrx{s}) \\
\vdots \\
r_{2n}^0(\mtrx{s}) 
\end{bmatrix},\quad r_i^0(\mtrx{s})=\sum_{\mtrx{m}}R_{i,\mtrx{m}}^0\mtrx{s}^\mtrx{m}, \\
\mtrx{W}_1(\mtrx{s},\phi) &=
\begin{bmatrix}
w_1^1(\mtrx{s},\phi) \\
\vdots \\
w_{2n}^1(\mtrx{s},\phi) 
\end{bmatrix},\quad w_i^1(\mtrx{s},\phi)=\sum_{\mtrx{m}}W_{i,\mtrx{m}}^1(\phi)\mtrx{s}^\mtrx{m}, \\
\mtrx{R}_1(\mtrx{s},\phi) &=
\begin{bmatrix}
r_1^1(\mtrx{s},\phi) \\
\vdots \\
r_{2n}^1(\mtrx{s},\phi) 
\end{bmatrix},\quad r_i^1(\mtrx{s},\phi)=\sum_{\mtrx{m}}R_{i,\mtrx{m}}^1(\phi)\mtrx{s}^\mtrx{m}. 
\end{align}
Here we have made us of the multi-index notation $\mtrx{m}\in\mathbb{N}_0^{2}$. We now assume that Eq. (\ref{eq:auto_SSM}) has already been solved for $\mtrx{W}_0(\mtrx{s})$ and $\mtrx{R}_0(\mtrx{s})$ and therefore their coefficients $W_{i,\mtrx{m}}^0$ and $R_{i,\mtrx{m}}^0$ are known.
\begin{theorem}\label{thrm:coef_eq}
For $\phi\in S^1$, the coefficient equation related to the $\textbf{k}^\text{th}$-power term of the $i^\text{th}$ row of the non-autonomous invariance Eq. (\ref{eq:invar_O_eps}), is equal to
\begin{equation}
\left(\lambda_i-\sum_{j=1}^{2}k_j\lambda_j\right)W_{i,\mtrx{k}}^1(\phi)-D_{\phi} W_{i,\mtrx{k}}^1(\phi)\Omega = \sum_{j=1}^{2}\delta_{ij}R_{j,\mtrx{k}}^1(\phi) + P_{i,\mtrx{k}}(\phi), \label{eq:coef_EQ_W1}
\end{equation}
where $P_{i,\mtrx{k}}(\phi)$ can be written as 
\begin{align}
& \sum_{j=1}^{2}\sum_{\substack{\mtrx{m}\leq\tilde{\mtrx{k}}_j \\ \mtrx{m}\neq \mtrx{e}_j \\ m_j>0}} m_j W_{i,\mtrx{m}}^0 R_{j,\tilde{\mtrx{k}}_j-\mtrx{m}}^1(\phi)+\sum_{j=1}^{2}\sum_{\substack{\mtrx{m}\leq\tilde{\mtrx{k}}_j \\ \mtrx{m}\neq\mtrx{k} \\ m_j>0}} m_j W_{i,\mtrx{m}}^1(\phi)R_{j,\tilde{\mtrx{k}}_j-\mtrx{m}}^0 \\
&-F_{i,\mtrx{k}}(\phi) -\left[\sum_{j=1}^{2n}D_{x_j}g_i(\mtrx{W}_0(\mtrx{s}))w_j^1(\mtrx{s},\phi)\right]_\mtrx{k} \nonumber
\end{align}
\end{theorem}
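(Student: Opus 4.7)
\medskip

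\noindent\textbf{Proof proposal for Theorem \ref{thrm:coef_eq}.}

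The plan is to substitute the multi-index Taylor expansions of $\mtrx{W}_0,\mtrx{R}_0,\mtrx{W}_1,\mtrx{R}_1$ in $\mtrx{s}$ into the $\mathcal{O}(\varepsilon)$ invariance equation (\ref{eq:invar_O_eps}) and read off the coefficient of $\mtrx{s}^{\mtrx{k}}$ in the $i$-th row. Since $\mtrx{W}_0$ and $\mtrx{R}_0$ are already known, the equation (\ref{eq:invar_O_eps}) is linear in the unknowns $\mtrx{W}_1(\mtrx{s},\phi)$ and $\mtrx{R}_1(\mtrx{s},\phi)$, so the coefficient extraction reduces to a bookkeeping exercise term-by-term, with the only subtlety being the isolation of the two distinguished contributions that form the left-hand side of Eq.~(\ref{eq:coef_EQ_W1}).

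First I would process the six terms of (\ref{eq:invar_O_eps}) in turn. The term $\gmtrx{\Lambda}\mtrx{W}_1$ contributes $\lambda_i W_{i,\mtrx{k}}^1(\phi)$; the term $D_{\phi}\mtrx{W}_1\Omega$ contributes $\Omega\, D_\phi W_{i,\mtrx{k}}^1(\phi)$; the forcing $\mtrx{F}_\text{m}(\phi)$ contributes $F_{i,\mtrx{k}}(\phi)$; and the Jacobian-times-$\mtrx{W}_1$ term contributes the last expression of $P_{i,\mtrx{k}}$, since it is already expressed componentwise as $\sum_{j=1}^{2n} D_{x_j} g_i(\mtrx{W}_0(\mtrx{s}))w_j^1(\mtrx{s},\phi)$ and one simply reads off its $\mtrx{s}^{\mtrx{k}}$ coefficient.

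The main step — and the one that carries the indexing subtlety — is the treatment of the two products $D_\mtrx{s}\mtrx{W}_0\,\mtrx{R}_1$ and $D_\mtrx{s}\mtrx{W}_1\,\mtrx{R}_0$. For $D_\mtrx{s}\mtrx{W}_0(\mtrx{s})\mtrx{R}_1(\mtrx{s},\phi)$, I would write the $i$-th component as $\sum_{j=1}^{2}\partial_{s_j}w_i^0(\mtrx{s})\,r_j^1(\mtrx{s},\phi)$, differentiate termwise so that a monomial $\mtrx{s}^{\mtrx{m}}$ in $w_i^0$ pairs with $\mtrx{s}^{\tilde{\mtrx{k}}_j-\mtrx{m}}$ in $r_j^1$ where $\tilde{\mtrx{k}}_j=\mtrx{k}+\mtrx{e}_j$, and collect the resulting Cauchy-product sum $\sum_{j=1}^{2}\sum_{\mtrx{m}\le\tilde{\mtrx{k}}_j,\,m_j>0} m_j W_{i,\mtrx{m}}^0 R_{j,\tilde{\mtrx{k}}_j-\mtrx{m}}^1(\phi)$. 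I would then split off the index $\mtrx{m}=\mtrx{e}_j$: since $\mtrx{W}_0$ perturbs smoothly from the identity embedding of $\mathcal{E}$, the linear coefficient satisfies $W_{i,\mtrx{e}_j}^0=\delta_{ij}$, so this special summand is exactly $\sum_{j=1}^{2}\delta_{ij}R_{j,\mtrx{k}}^1(\phi)$, which is the distinguished term on the right-hand side of (\ref{eq:coef_EQ_W1}). The remaining indices $\mtrx{m}\neq\mtrx{e}_j$ produce the first double sum appearing in $P_{i,\mtrx{k}}$.

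An entirely analogous manipulation for $D_\mtrx{s}\mtrx{W}_1\,\mtrx{R}_0$ yields $\sum_{j=1}^{2}\sum_{\mtrx{m}\le\tilde{\mtrx{k}}_j,\,m_j>0}m_j W_{i,\mtrx{m}}^1(\phi)R_{j,\tilde{\mtrx{k}}_j-\mtrx{m}}^0$, and here I would isolate the index $\mtrx{m}=\mtrx{k}$, for which $\tilde{\mtrx{k}}_j-\mtrx{m}=\mtrx{e}_j$ and $R_{j,\mtrx{e}_j}^0=\lambda_j$ (the diagonalized linear part of $\mtrx{R}_0$ on $\mathcal{E}$). This extracts the contribution $\sum_{j=1}^{2}k_j\lambda_j W_{i,\mtrx{k}}^1(\phi)$, which, moved to the left-hand side together with $\lambda_i W_{i,\mtrx{k}}^1(\phi)$ obtained from $\gmtrx{\Lambda}\mtrx{W}_1$, produces the prefactor $\bigl(\lambda_i-\sum_{j=1}^{2}k_j\lambda_j\bigr)$. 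The remaining indices $\mtrx{m}\neq\mtrx{k}$ form the second double sum of $P_{i,\mtrx{k}}$. Moving $\mtrx{F}_\text{m}$ and the Jacobian contribution to the right-hand side with the corresponding sign yields the two remaining terms of $P_{i,\mtrx{k}}$, completing (\ref{eq:coef_EQ_W1}).

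The main obstacle is purely notational: keeping the multi-index bookkeeping consistent when splitting off the two distinguished summands, and justifying the normalizations $W_{i,\mtrx{e}_j}^0=\delta_{ij}$ and $R_{j,\mtrx{e}_j}^0=\lambda_j$, which are inherited from the choice of parameterization of the autonomous SSM in Eq.~(\ref{eq:auto_SSM}) and the diagonalization of $\gmtrx{\Lambda}$. Once these identifications are made, the proof reduces to a direct comparison of like powers of $\mtrx{s}$.
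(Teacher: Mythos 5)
Your proposal is correct and follows essentially the same route as the paper: substitute the multi-index expansions into Eq.~(\ref{eq:invar_O_eps}), read off the $\mtrx{s}^{\mtrx{k}}$ coefficient of each of the six terms, and split off the distinguished summands $\mtrx{m}=\mtrx{e}_j$ and $\mtrx{m}=\mtrx{k}$ from the two Cauchy-product sums. You are in fact slightly more explicit than the paper, which leaves the normalizations $W_{i,\mtrx{e}_j}^0=\delta_{ij}$ and $R_{j,\mtrx{e}_j}^0=\lambda_j$ implicit in that final splitting step.
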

\begin{proof}
Assuming that $\phi\in S^1$, we obtain that for the $i^\text{th}$ row, the $\mtrx{k}^\text{th}$-power terms on the right-hand side of Eq. (\ref{eq:invar_O_eps}) can be expressed as
\begin{align}
\left[D_\mtrx{s}\mtrx{W}_0(\mtrx{s})\mtrx{R}_1(\mtrx{s},\phi)\right]^{\mtrx{k}}_i &= \sum_{j=1}^{2}\sum_{\substack{\mtrx{m}\leq\tilde{\mtrx{k}}_j \\ m_j>0}} m_j W_{i,\mtrx{m}}^0 R_{j,\tilde{\mtrx{k}}_j-\mtrx{m}}^1(\phi), \\
\left[D_\mtrx{s}\mtrx{W}_1(\mtrx{s},\phi)\mtrx{R}_0(\mtrx{s})\right]^{\mtrx{k}}_i &= \sum_{j=1}^{2}\sum_{\substack{\mtrx{m}\leq\tilde{\mtrx{k}}_j \\ m_j>0}} m_j W_{i,\mtrx{m}}^1(\phi)R_{j,\tilde{\mtrx{k}}_j-\mtrx{m}}^0, \\
\left[D_{\phi} \mtrx{W}_1(\mtrx{s},\phi)\Omega\right]^{\mtrx{k}}_i &=D_{\phi}W_{i,\mtrx{k}}^1(\phi)\Omega ,
\end{align}
where we have made use of the multi-index notation
\begin{equation}
\mtrx{m}\in\mathbb{N}_0^2,\quad \mtrx{k}\in\mathbb{N}_0^2,\quad \tilde{\mtrx{k}}_j = \mtrx{k}+\mtrx{e}_j,
\end{equation}
with $\mtrx{e}_j$ denoting a unit vector. 

The $\mtrx{k}^\text{th}$-power terms on the left-hand side of the $i^\text{th}$ row of Eq. (\ref{eq:invar_O_eps}) can be written as
\begin{align}
\left[\gmtrx{\Lambda}\mtrx{W}_1(\mtrx{s},\phi)\right]^\mtrx{k}_i &= \lambda_iW_{i,\mtrx{k}}^1(\phi), \\
\left[D_\mtrx{x}\mtrx{G}_\text{m}(\mtrx{W}_0(\mtrx{s}))\mtrx{W}_1(\mtrx{s},\phi)\right]^{\mtrx{k}}_i &=\left[\sum_{j=1}^{2n}D_{x_j}g_i(\mtrx{W}_0(\mtrx{s}))w_j^1(\mtrx{s},\phi)\right]_\mtrx{k}, \\
\left[\mtrx{F}_\text{m}(\phi)\right]^\mtrx{k}_i&=F_{i,\mtrx{k}}(\phi). 
\end{align}
Therefore, the coefficient equation related to the $\textbf{k}^\text{th}$-power term of the $i^\text{th}$ row of the non-autonomous invariance Eq. (\ref{eq:invar_O_eps}) is
\begin{equation}
\left(\lambda_i-\sum_{j=1}^{2}k_j\lambda_j\right)W_{i,\mtrx{k}}^1(\phi)-D_{\phi} W_{i,\mtrx{k}}^1(\phi)\Omega = \sum_{j=1}^{2}\delta_{ij}R_{j,\mtrx{k}}^1(\phi) + P_{i,\mtrx{k}}(\phi),
\end{equation}
where 
\begin{align}
P_{i,\mtrx{k}}(\phi) &= \sum_{j=1}^{2}\sum_{\substack{\mtrx{m}\leq\tilde{\mtrx{k}}_j \\ \mtrx{m}\neq \mtrx{e}_j \\ m_j>0}} m_j W_{i,\mtrx{m}}^0 R_{j,\tilde{\mtrx{k}}_j-\mtrx{m}}^1(\phi)+\sum_{j=1}^{2}\sum_{\substack{\mtrx{m}\leq\tilde{\mtrx{k}}_j \\ \mtrx{m}\neq\mtrx{k} \\ m_j>0}} m_j W_{i,\mtrx{m}}^1(\phi)R_{j,\tilde{\mtrx{k}}_j-\mtrx{m}}^0 \\
&-F_{i,\mtrx{k}}(\phi) -\left[\sum_{j=1}^{2n}D_{x_j}g_i(\mtrx{W}_0(\mtrx{s}))w_j^1(\mtrx{s},\phi)\right]_\mtrx{k}, \nonumber
\end{align}
which concludes the proof of Theorem \ref{thrm:coef_eq}.\qed
\end{proof}
\subsection{Solving the non-autonomous invariance equation for $|\mtrx{k}|=0$ \label{sec:sol_k_0}}
For $|\mtrx{k}|=\mtrx{0}$, Eq. (\ref{eq:coef_EQ_W1}) reduces to 
\begin{equation}
\lambda_iW_{i,\mtrx{0}}^1(\phi)-D_{\phi} W_{i,\mtrx{0}}^1(\phi)\Omega = \sum_{j=1}^{2}\delta_{ij}R_{j,\mtrx{0}}^1(\phi) -F_{i,\mtrx{0}}(\phi), \label{eq:coef_EQ_W1_k_0}
\end{equation}
which is equivalent to the $0^\text{th}$-order expansion of Breunung and Haller \cite{Breunung2017}. Assuming that the forcing term $F_{i,\mtrx{0}}(\phi)$ can be written as 
\begin{equation}
F_{i,\mtrx{0}}(\phi)= \tilde{F}_{i,\mtrx{0}}\frac{\me^{\mi\phi}+\me^{-\mi\phi}}{2},
\end{equation}
we express $W_{i,\mtrx{0}}^1(\phi)$ and $R_{i,\mtrx{0}}^1(\phi)$ in the following form
\begin{equation}
W_{i,\mtrx{0}}^1(\phi) = a_{i,\mtrx{0}}\me^{\mi\phi} + b_{i,\mtrx{0}}\me^{-\mi\phi},\quad
R_{i,\mtrx{0}}^1(\phi) = c_{i,\mtrx{0}}\me^{\mi\phi} + d_{i,\mtrx{0}}\me^{-\mi\phi}.
\end{equation}
As the SSM is constructed over a two-dimensional spectral subpace $\mathcal{E}$, corresponding to the eigenvalues $\lambda_1=\text{Re}\lambda_1+\mi\text{Im}\lambda_1$ and $\lambda_2=\bar{\lambda}_1=\text{Re}\lambda_1-\mi\text{Im}\lambda_1$, we can write the solution of Eq. (\ref{eq:coef_EQ_W1_k_0}) as 
\begin{equation}
W_{i,\mtrx{0}}^1 = \frac{\delta_{i1}c_{1,\mtrx{0}}+\delta_{i2}c_{2,\mtrx{0}}-\frac{1}{2}\tilde{F}_{i,\mtrx{0}}}{\lambda_i-\mi\Omega}\me^{\mi\phi} + \frac{\delta_{i1}d_{1,\mtrx{0}}+\delta_{i2}d_{2,\mtrx{0}}-\frac{1}{2}\tilde{F}_{i,\mtrx{0}}}{\lambda_i+\mi\Omega}\me^{-\mi\phi}. \label{eq:coef_0}
\end{equation}
Our main goal is to obtain the forced response curve around the frequency $\text{Im}\lambda_1$, which corresponds to the spectral subspace $\mathcal{E}$. For lightly damped systems where $\text{Re}\lambda_1$ is small, we obtain small denominators in Eq. (\ref{eq:coef_0}) if the forcing frequency $\Omega$ is approximately equal to $\text{Im}\lambda_1$. We, therefore, intend to remove this near-resonance by setting
\begin{equation}
c_{1,\mtrx{0}}=\frac{1}{2}\tilde{F}_{1,\mtrx{0}},\quad c_{2,\mtrx{0}}=0,\quad
d_{1,\mtrx{0}}=0,\quad d_{2,\mtrx{0}}=\frac{1}{2}\tilde{F}_{2,\mtrx{0}}. 
\end{equation}
\subsection{Solving the non-autonomous invariance equation for $|\mtrx{k}|>0$}
For $|\mtrx{k}|>0$, the solution to the non-autonomous invariance Eq. (\ref{eq:coef_EQ_W1}) is given by 
\begin{equation}
W_{i,\mtrx{k}}^1(\phi)=\underbrace{\frac{\sum_{j=1}^{2}\delta_{ij}c_{j,\mtrx{k}}+\alpha_{i,\mtrx{k}}}{\lambda_i-\sum_{j=1}^{2}k_j\lambda_j-\mi\Omega}}_{a_{i,\mtrx{k}}}\me^{\mi\phi}+\underbrace{\frac{\sum_{j=1}^{2}\delta_{ij}d_{j,\mtrx{k}}+\beta_{i,\mtrx{k}}}{\lambda_i-\sum_{j=1}^{2}k_j\lambda_j+\mi\Omega}}_{b_{i,\mtrx{k}}}\me^{-\mi\phi}, \label{eq:coef_EQ_W1_k_higher}
\end{equation}
where we have let $P_{i,\mtrx{k}}=\alpha_{i,\mtrx{k}}\me^{\mi \phi}+\beta_{i,\mtrx{k}}\me^{-\mi \phi}$. 
Using the same reasoning as in section \ref{sec:sol_k_0}, we want to choose $c_{i,\mtrx{k}}$ and $d_{i,\mtrx{k}}$ in Eq. (\ref{eq:coef_EQ_W1_k_higher}) in such a way that the coefficients $a_{i,\mtrx{k}}$ and $b_{i,\mtrx{k}}$, which become large when the damping is small and the forcing frequency $\Omega$ is close to $\text{Im}\lambda_1$, are completely removed. These terms are the so-called resonant terms.  In Table \ref{tab:res_non_auto}, we list the resonant terms $a_{i,\mtrx{k}}$ and $b_{i,\mtrx{k}}$ for small damping and for $\Omega\approx\text{Im}\lambda_1$.
\begin{table}[H]
\caption{Resonant terms in the non-autonomous part of the SSM for small damping and forcing frequency $\Omega\approx\text{Im}\lambda_1$. \label{tab:res_non_auto}}
\begin{centering}
\begin{tabular}{|l|l|l|}
\hline
& $i=1$ & $i=2$\tabularnewline
\hline
\hline
$a_{i,\mtrx{k}}$ & $k_1=k_2$ & $k_1=k_2-2$ \tabularnewline
\hline
$b_{i,\mtrx{k}}$ & $k_1=k_2+2$ & $k_1=k_2$ \tabularnewline
\hline
\end{tabular}
\par\end{centering}
\end{table}
The terms listed in Table \ref{tab:res_non_auto} will be removed from the expressions of $\mtrx{W}_1(\mtrx{s},\phi)$ and included into the non-autonomous part of the reduced dynamics $\mtrx{R}_1(\mtrx{s},\phi)$, in order to avoid small denominators in $\mtrx{W}_1(\mtrx{s},\phi)$. 

\section{Proof of Theorem \ref{thm:red_dyn}\label{app:red_dyn}}
The $\mathcal{O}(\varepsilon)$ approximation of the reduced dynamics for $\mtrx{s}$ can be written as
\begin{equation}
\dot{\mtrx{s}} = \mtrx{R}(\mtrx{s},\phi)=\mtrx{R}_0(\mtrx{s})+\varepsilon\mtrx{R}_1(\mtrx{s},\phi),\label{eq:red_eq_total}
\end{equation}
where the first row of Eq. (\ref{eq:red_eq_total}) takes the form
\begin{align}
\dot{s}_1 &= \lambda_1 s_1 + \sum_{i=1}^M\gamma_is_1^{i+1}s_2^{i} \\
&+\varepsilon\left(c_{1,\mtrx{0}}\me^{\mi\phi} + \sum_{i=1}^M\left(c_{1,(i,i)}(\Omega)s_1^is_2^i\me^{\mi\phi}+d_{1,(i+1,i-1)}(\Omega)s_1^{i+1}s_2^{i-1}\me^{-\mi\phi}\right) \right), \nonumber
\end{align}
and the second row of Eq. (\ref{eq:red_eq_total}) is, by construction, the complex conjugate of the first row.
Introducing a change to polar coordinates, $s_1=\rho\me^{\mi\theta}$, $s_2=\bar{s}_1=\rho\me^{-\mi\theta}$, dividing by $\me^{\mi\theta}$ and introducing the new phase coordinate $\psi = \theta-\phi$, we obtain
\begin{align}
\dot{\rho}+\mi\rho(\dot{\psi}+\Omega) &= \lambda_1 \rho + \sum_{i=1}^M\gamma_i\rho^{2i+1} \label{eq:polar_red_dyn} \\
&+\varepsilon\left(c_{1,\mtrx{0}}\me^{-\mi\psi} + \sum_{i=1}^M\left(c_{1,(i,i)}(\Omega)\rho^{2i}\me^{-\mi\psi}+d_{1,(i+1,i-1)}(\Omega)\rho^{2i}\me^{\mi\psi}\right) \right). \nonumber
\end{align}
We obtain the result listed in Theorem \ref{thm:red_dyn} by splitting Eq. (\ref{eq:polar_red_dyn}) into its real and imaginary part. \qed

\section{Extracting the forced response curve \label{app:ext_fr}}
For convenience, we restate our zero problem (\ref{eq:zeroproblem}),
\begin{equation}
\mtrx{F}(\mtrx{u}) = 
\begin{bmatrix}
F_1(\mtrx{u})\\
F_2(\mtrx{u})
\end{bmatrix} = 
\begin{bmatrix}
a(\rho) + \varepsilon\left(f_1(\rho,\Omega)\cos(\psi)+f_2(\rho,\Omega)\sin(\psi)\right) \\
(b(\rho)-\Omega)\rho + \varepsilon\left(g_1(\rho,\Omega)\cos(\psi)-g_2(\rho,\Omega)\sin(\psi)\right) 
\end{bmatrix} = \mtrx{0}, \label{eq:zeroproblem_app}\\
\end{equation}
where
\begin{equation}
\mtrx{F}(\mtrx{u}):\mathbb{R}^3 \rightarrow \mathbb{R}^2, \quad
\mtrx{u} = 
\begin{bmatrix}
\rho \\
\Omega\\ 
\psi
\end{bmatrix}. \nonumber
\end{equation}
If there exists a regular point  $\mtrx p=(\rho,\Omega,\psi)$, such that $\mtrx F(\mtrx p )=\mtrx 0$  in (\ref{eq:zeroproblem_app}) and the Jacobian of $\mtrx F$ evaluated at $\mtrx p$ is surjective, then by the implicit function theorem, locally there exists a one-dimensional submanifold of $\mathbb{R}^3$ which will represent the forced response curve when projected onto the $(\Omega,\rho)$-space. Equivalently, the zero-level sets of $F_1(\mtrx u)$ and $F_2(\mtrx u)$ in (\ref{eq:zeroproblem_app}), which we will denote by $\mathcal{M}_1^{\mtrx p}$ and $\mathcal{M}_2^{\mtrx p}$, will be two two-dimensional submanifolds in the $(\rho,\psi,\Omega)$-space that, locally around $\mtrx p$, intersect each other transversely, yielding the forced response curve. We illustrate this concept in Fig. \ref{fig:FRC_intersection}, which is a typical picture for a damped non-linear periodically forced mechanical system with a hardening nonlinearity.

\begin{figure}
\centering
    \begin{subfigure}{0.45\textwidth}
        \centering
         \if\mycmd1
           \includegraphics[scale=0.35]{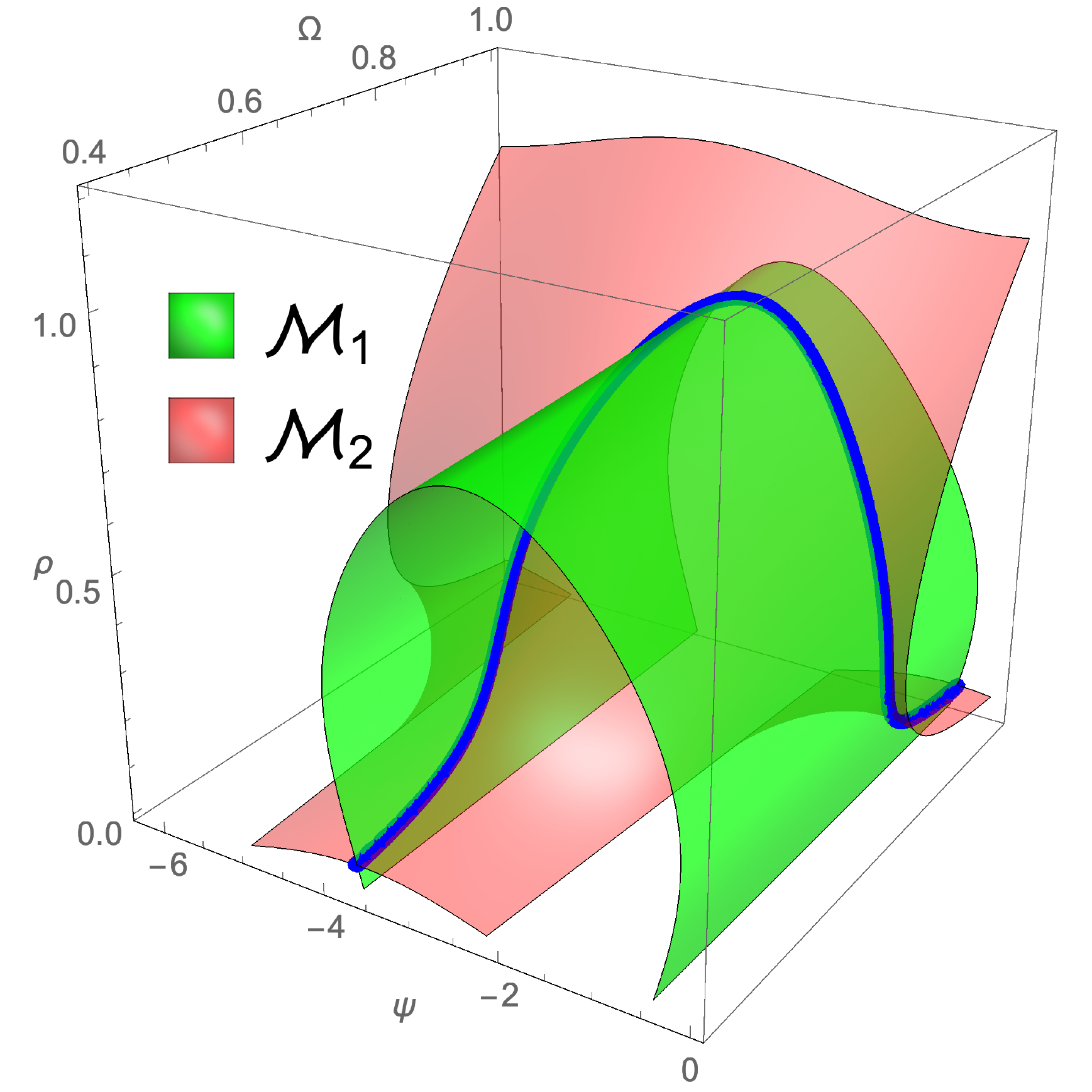}
		 \fi
       \caption{ \label{fig:FRC_intersection_a}}
        \vspace*{2mm}
    \end{subfigure}
    \begin{subfigure}{0.45\textwidth}
        \centering
        \if\mycmd1
        \includegraphics[scale=0.32]{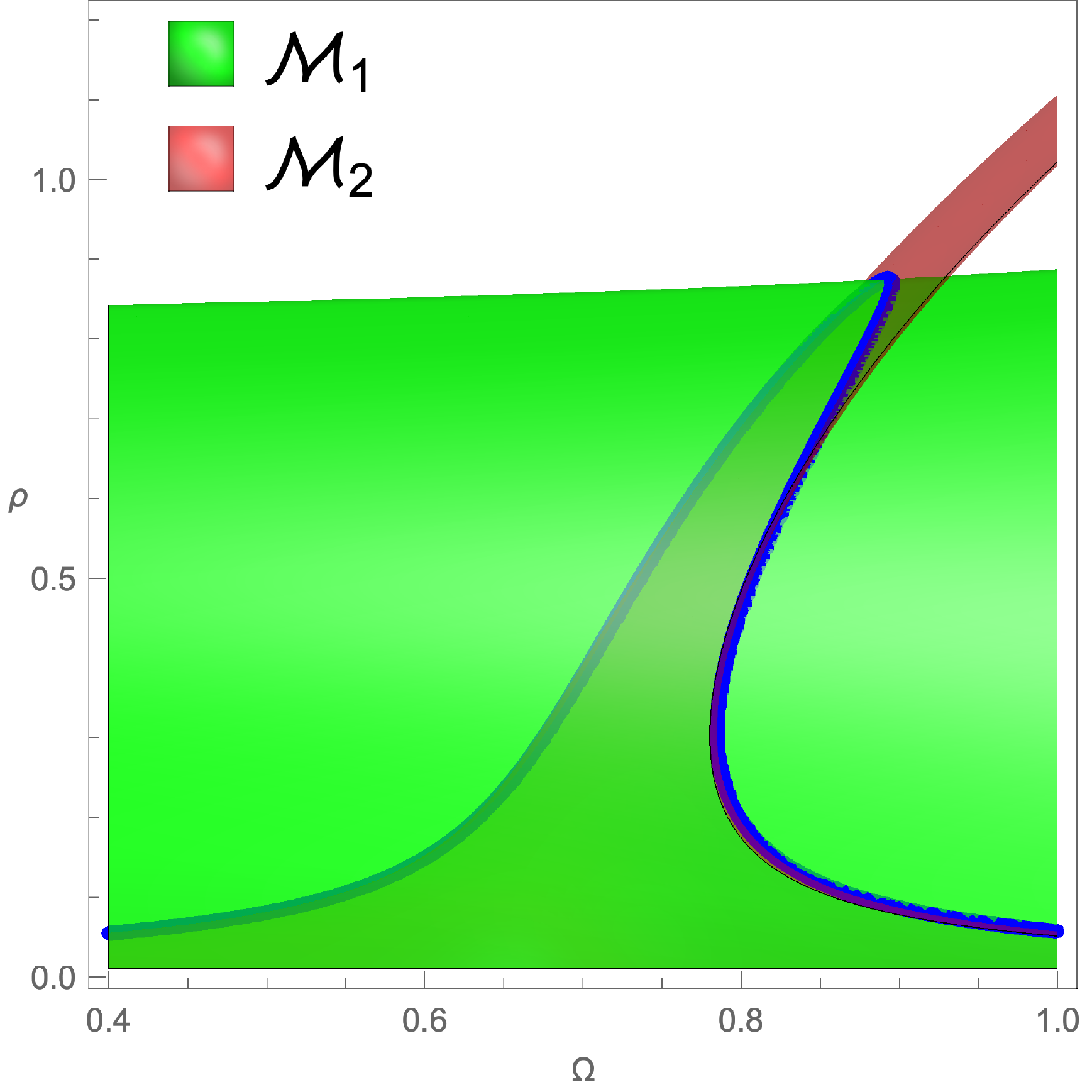}
        \fi
        \caption{ \label{fig:FRC_intersection_b}}
    \end{subfigure}%
    \caption{(a)-(b): Intersection of $\mathcal{M}_1$ (green) and $\mathcal{M}_2$ (red), yielding the frequency-response curve in blue. \label{fig:FRC_intersection}}
\end{figure}

\section{Proof of Theorem \ref{thm:gen_FR} \label{app:impl_eq}}
Let $\mtrx{u}_0$ be a regular point of the map $\mtrx{F}(\mtrx u)$ (\ref{eq:zeroproblem}) such that $\mtrx F(\mtrx u_0)=\mtrx 0$, and $D_\mtrx{u} \mtrx F(\mtrx u_0)$ is surjective. Then, by the implicit function theorem, locally there exists a one-dimensional submanifold of $\mathbb{R}^3$ around $\mtrx u_0$. We express $\psi$ as a function of $\rho$ and $\Omega$, using the tangent half-angle substitution and the trigonometric identities,
\begin{equation}
\frac{\psi}{2} = \tan^{-1}\left(K\right),\quad \cos\left(\psi\right)=\frac{1-K^2}{1+K^2},\quad \sin\left(\psi\right)=\frac{2K}{1+K^2}. \label{eq:trig_id}
\end{equation}
Setting Eq. (\ref{eq:red1_orig}) equal to zero and substituting the identities into Eq. (\ref{eq:red1_orig}), we obtain a quadratic equation in $K$,
\begin{equation}
\left(a(\rho)-\varepsilon f_1(\rho,\Omega)\right)K^2 + 2\varepsilon f_2(\rho,\Omega)K+ \left(a(\rho)+\varepsilon f_1(\rho,\Omega)\right) = 0, \label{eq:quadK}
\end{equation}
which has the solution,
\begin{align}
K(\rho;\Omega)^\pm=\frac{-\varepsilon f_2(\rho,\Omega)\pm \sqrt{\varepsilon^2\left(f_1(\rho,\Omega)^2+f_2(\rho,\Omega)^2\right)-a(\rho)^2}}{a(\rho) - \varepsilon f_1(\rho,\Omega)}. \label{eq:rel_u_v}
\end{align}
Substituting Eq. (\ref{eq:rel_u_v}), together with the trigonometric identities in (\ref{eq:trig_id}), into Eq. (\ref{eq:red2_orig}), we obtain the result stated in Theorem \ref{thm:gen_FR}. \qed

\section{Proof of Theorem \ref{thrm:birth_isola} \label{app:birth_isola}}
We now consider $\varepsilon$ to be a variable in our zero problem (\ref{eq:zeroproblem}), i.e.
\begin{equation}
\mtrx{F}(\mtrx{u},\varepsilon):\mathbb{R}^4 \rightarrow \mathbb{R}^2, \quad
\mtrx{u} = 
\begin{bmatrix}
\rho \\
\Omega\\ 
\psi
\end{bmatrix}. \nonumber
\end{equation}
If there exists a non-spurious non-trivial transverse zero $\rho_0:a(\rho_0)=0$ and $\partial_\rho a(\rho_0)\neq0$, then by restricting ourselves to the autonomous backbone curve (see Ponsioen et al. \cite{ponsioen2018automated}), i.e.
\begin{equation}
\mtrx{u}_0 = [\rho_0,\Omega_0,\psi_0,]^\top,\quad \Omega_0 = b(\rho_0),\quad\psi_0 = \text{const.},\quad \varepsilon = 0, \nonumber
\end{equation}
we have found a solution 
\begin{equation}
\mtrx{F}(\mtrx{u}_0,0) = \mtrx{0}.
\end{equation}
The Jacobian of $\mtrx F$ with respect to $\rho$ and $\Omega$, evaluated at the solution $(\mtrx u_0, 0)$, is given by the square matrix
\begin{align}
D_{(\rho,\Omega)}\mtrx{F}(\mtrx{u}_0,0) = 
\begin{bmatrix}[c]
\partial_\rho a(\rho_0) & 0 \\
\partial_\rho b(\rho_0)\rho_0 & -\rho_0
\end{bmatrix},
\end{align}
which is invertible. Therefore, by the implicit function theorem, we can continue our solution as a two-dimensional submanifold of $\mathbb{R}^4$. Locally, we can express $\rho$ and $\Omega$ as a function of $\psi$ and $\varepsilon$. For $\varepsilon>0$, an isola is born out of the non-trivial transverse zero on the autonomous backbone curve located at $(\Omega,\rho)=(b(\rho_0),\rho_0)$. For a fixed forcing ampltide $\varepsilon$, the isola is parameterized by $\psi$ (as illustrated in Fig. \ref{fig:inter_perb}). \qed

\section{Proof of Theorem \ref{thrm:isola} \label{app:isola}}
In the setting of (\ref{eq:zeroredsys}), our implicit function (\ref{eq:G_thrm}) will reduce to
\begin{equation}
G(\rho,\Omega) = (b(\rho)-\Omega)\rho\pm\sqrt{\varepsilon^2\norm{c_{1,\mtrx{0}}}^2-a(\rho)^2}=0. \label{eq:0_implicit}
\end{equation}
Any zero $\rho_0$ that makes the square root term in Eq. (\ref{eq:0_implicit}) vanish, will also be a zero of (\ref{eq:0_implicit}) itself by setting $\Omega=b(\rho_0)$, and therefore will be on the forced response curve and on the autonomous backbone curve. Additionally, at this point, two segments of the FRC will meet and create a fold over the $\Omega$ direction. We set the argument inside the square root in Eq. (\ref{eq:0_implicit}) equal to zero and rewrite it as
\begin{equation}
\Delta(\rho)=a(\rho)\pm\varepsilon\norm{c_{1,\mtrx{0}}}=0. \label{eq:a_cond}
\end{equation} 
Restricting $\rho\in \mathbb{R}^+_0$, then for $\text{Re}(\gamma_1)>0$, the third-order autonomous function $a(\rho)$ will have a trivial transverse zero and a non-trivial transverse zero located at
\begin{equation}
\rho_0 = 0,\quad \rho_1 = \sqrt{\frac{|\text{Re}(\lambda_1)|}{\text{Re}(\gamma_1)}},
\end{equation}
such that $a(\rho_0)=0$, $a(\rho_1)=0$, $\partial_\rho a(\rho_0)\neq 0$ and $\partial_\rho a(\rho_1)\neq 0$. Under the assumption that the cubic order zero $\rho_1$ is a non-spurious zero for the function $a(\rho)$, then, using the same type of argument as in the proof of Theorem \ref{thrm:birth_isola}, an isola will be born out of this non-trivial transverse zero for system (\ref{eq:zeroredsys}).

We note that between $\rho_0$ and $\rho_1$, $a(\rho)$ will have a local minimum at,
\begin{equation}
\partial_\rho a(\rho) = -|\text{Re}(\lambda_1)|+3\text{Re}(\gamma_1)\rho^2 = 0\quad \Rightarrow \quad \tilde{\rho} = \sqrt{\frac{|\text{Re}(\lambda_1)|}{3\text{Re}(\gamma_1)}}.
\end{equation}
Therefore, for $\varepsilon>0$ small enough, the function $a(\rho)$ will have  three intersections with  the constant curves $\pm \varepsilon \norm{c_{1,\mtrx{0}}}$, meaning that we have found three zeros, $0<\rho_0^\text{a}<\rho_1^\text{a}<\rho_1^\text{b}$ of Eq. (\ref{eq:a_cond}), that correspond to three folding points of the FRC over the $\Omega$ direction. In this setting, $\rho_0^\text{a}$ corresponds to the maximum amplitude of the main FRC branch, $\rho_1^\text{a}$  corresponds to the minimum amplitude of the isola, whereas $\rho_1^\text{b}$ will be the maximum amplitude of the isola.

We can increase $\varepsilon$ such that $\rho_0^\text{a}=\rho_1^\text{a}$, which merges the maximum amplitude of the main FRC branch with the minimum amplitude of the isola, which is exactly at
\begin{equation}
 \varepsilon_\text{m} =  \frac{1}{\norm{c_{1,\mtrx{0}}}}\sqrt{\frac{4|\text{Re}(\lambda_1)|^3}{27\text{Re}(\gamma_1)}}, \label{eq:merge}
\end{equation}
whereas for
\begin{equation}
 0< \varepsilon < \frac{1}{\norm{c_{1,\mtrx{0}}}}\sqrt{\frac{4|\text{Re}(\lambda_1)|^3}{27\text{Re}(\gamma_1)}}. \label{eq:sep}
\end{equation}
the isola will be disconnected from the main FRC, proving Theorem \ref{thrm:isola}. \qed

\bibliographystyle{unsrt} 
\bibliography{ref.bib}

\end{document}